\numberwithin{equation}{section}
\theoremstyle{plain}
\theoremstyle{definition}
\newtheorem{definition}{Definition}[section]
\newtheorem{theorem}[definition]{Theorem}
\newtheorem{proposition}[definition]{Proposition}
\newtheorem{lemma}[definition]{Lemma}
\newtheorem{remark}[definition]{Remark}
\newenvironment{thm}{\begin{theorem}}{\end{theorem}}
\newcommand{\Real}{\mathbb{R}}
\newcommand{\EXP}[1]{\mathsf{E}\left[#1\right]}
\newcommand{\PROB}[1]{\mathsf{P}\left[#1\right]}
\newcommand{\hd}{H\" older }
\newcommand{\D}{\mathrm{d}}
\title{A partial rough path space for rough volatility}
\author{Masaaki Fukasawa and Ryoji Takano
\\ {\small \textit{Graduate School of Engineering Science, Osaka University}}}
\date{}
\begin{document}
\maketitle

\begin{abstract}
  We develop a variant of rough path theory tailor-made for analyzing a class of financial asset price models known as rough volatility models. As an application, we prove a pathwise large deviation principle (LDP) for a certain class of rough volatility models, which in turn describes the limiting behavior of implied volatility for short maturity under those models. First, we introduce a partial rough path space and an integration map on it and then investigate several fundamental properties including local Lipschitz continuity of the integration map from the partial rough path space to a rough path space. Second, we construct a rough path lift of a rough volatility model. Finally, we prove an LDP on the partial rough path space, and the LDP for rough volatility then follows by the continuity of the solution map of rough differential equations.  
\end{abstract}

\newcommand{\ABS}[1]{\left(#1\right)} 
\newcommand{\veps}{\varepsilon} 


\section{Introduction}
A rough volatility model is a stochastic volatility model for an asset price process with volatility being rough, meaning that the H\"older regularity of the volatility path is less than half. Recently, such models have been attracting attention in mathematical finance because of their unique consistency to market data. Indeed, rough volatility models are the only class of continuous price models that are consistent to a power law of implied volatility term structure typically observed in equity option markets, as shown by \cite{F21}. 
One way to derive the power law under rough volatility models is to prove a large deviation principle (LDP) as done by many authors \cite{FoZh,BaFrGuHoSt, BaFrGaMaSt,  FrGaPi1, FrGaPi2,Gu, GuViZh1,GuViZh2, jacquier, GeJaPaStWa, JaPa, Gu22} using various methods.
An introduction to LDP and some of its applications to finance and insurance problems can be found in \cite{Pham, FGGJT}.
In the context of the implied volatility, a short-time LDP under local volatility models provides a validity proof for a precise approximation known as the BBF formula~\cite{BBF, ABBF}. The SABR formula, which is of daily use in financial practice, is also proved as a valid approximation under the SABR model by means of LDP~\cite{Osa}.
From these successes in classical (non-rough) volatility models, we expect LDP for rough volatility models to provide in particular a useful implied volatility approximation formula for financial practice such as model calibration.

For the classical models that are described by standard stochastic differential equations (SDEs), an elegant way to prove an LDP is to apply the contraction principle in the framework of rough path analysis~\cite{FV, Friz}.
Under rough volatility models, the volatility of an asset price has a lower H\"older regularity than the asset price process. 
The stochastic integrands are therefore not controlled by the stochastic integrators in the sense of  \cite{Gubinelli}.
Hence, a rough volatility model is beyond the scope of rough path theory, which motivated \cite{BaFrGaMaSt} to develop a regularity structure for rough volatility.
For classical SDEs, the Freidlin--Wentzell LDP can be obtained as a consequence of the continuity of the solution map (the Lyons--It\^o map) that is the core of rough path theory.
In \cite{BaFrGaMaSt}, the LDP for rough volatility models is obtained using the continuity of Hairer's reconstruction map.
Herein, we take an approach that is similar to that of \cite{BaFrGaMaSt} in spirit but differs somewhat. Instead of embedding a rough volatility model into the abstract framework of regularity structure, we develop a minimal extension of rough path theory to incorporate rough volatility models.
Besides the relatively elementary construction, an advantage of our theory is that 
it ensures the continuity of the integration map between rough path spaces, which enables us to treat a more general model than \cite{BaFrGaMaSt}.

We focus on a model of the following form:
\begin{equation}\label{rvm}
\D S_t =  \sigma(S_t) f(\hat{X}_t,t) \D X_t, \quad S_0 \in \mathbb{R},
\end{equation}
where $X$ is a $d$-dimensional Brownian motion, $\hat{X}$ is an $e$-dimensional stochastic process of which components include $\int_0^t \kappa (t-s) \D X_s$
with a deterministic $L^2$ kernel $\kappa$. The stochastic integration is in the It\^o sense.
An example is the rough Bergomi model ($\kappa = \kappa_H$ is the Riemann--Liouville kernel (\ref{RLk}), $f$ is exponential, and $\sigma(s) =s$ in (\ref{rvm})) introduced by \cite{BaFrGa}. 
When $\kappa=\kappa_H$ or more generally $\kappa$ has a similar singularity to $\kappa_H$ with $H<1/4$, beyond the case of $\sigma(s) = 1$ or $\sigma(s) = s$, no LDP is available in the literature so far, including~\cite{BaFrGaMaSt}.
As mentioned above, the difference between classical SDEs and (\ref{rvm}) is that the volatility process $\hat{X}$ is not controlled by $X$ because of its lower regularity. 
From empirical evidence, we are particularly interested in the case where $\hat{X}$ is correlated with $X$ and $H<1/4$~\cite{GaJaRo, Ben, FuTaWe, Chr}. 
Unfortunately, the application of existing rough path theory involves iterated integrals of $\hat{X}$ while, as is well-known, the standard rough path lift of $(X,\hat{X})$ that is amenable to LDP
does not work when $H<1/4$; see e.g.,~\cite{Friz}.

Our idea, inspired by \cite{BaFrGaMaSt}, is to consider a partial rough path space in which we lack the iterated integrals of $\hat{X}$ but are still able to treat \eqref{rvm}. 
More precisely, we define the space of a triplet of iterated integrals driven by $X$ (we do not consider iterated integrals driven by $\hat{X}$) and rederive analytical results obtained in existing rough path theory. The notion of a partial rough path was introduced in \cite{GuLe} to prove the existence of global solutions for differential equations driven by a rough path with vector fields of linear growth. Our motivation is different and requires a space of higher-level paths. 
In contrast to \cite{BaFrGaMaSt},
our method does not rely on the theory of regularity structure and enables us to treat not only the rough Bergomi model but also the following rough volatility models:
\begin{enumerate}
\item[-] the rough SABR model~\cite{FuHoTa,Mu, FuGa,Fu}; 
\item[-] the mixed rough Bergomi model  \cite{BoDeGo};
\item[-] rough local stochastic volatility \cite{JaOu};
\item[-] the two-factor fractional volatility model \cite{FuKi}.
\end{enumerate}
To the best of our knowledge, no LDP for these models is established so far in the literature.

To explain the idea of the partial rough path,
here, we argue for how such a partial rough path space should be. Suppose that $d,e \geqq 1$, $x : [0,T] \rightarrow \mathbb{R}^d$, $\hat{x} : [0,T] \rightarrow \mathbb{R}^e$, and $f :\mathbb{R}^e \rightarrow \mathbb{R}$ are good enough. By the Taylor expansion, for $s< t$ (which are close enough), we have 
\begin{equation*}
\int_{s}^{t} f(\hat{x}_r) 
\D x_r 
\approx  f(\hat{x}_s) (x_t-x_s) + \sum_{|i| \leqq n} \frac{1}{i!} \partial^i f (\hat{x}_s) \left [ \int_s^t (\hat{x}_r - \hat{x}_s)^{i} \D x_r \right ]
\end{equation*}
and
\begin{equation*}
\begin{split}
 &\int_s^t \left ( \int_s^{r} \D y_u \right)\otimes \D y_r 
 \\ &\approx  \sum_{|j+k| \leqq n}  \frac{1}{j! k!}  \partial^j f(\hat{x}_s) \partial^k f(\hat{x}_s) \left [ \int_s^t (\hat{x}_r - \hat{x}_s )^k \left ( \int_s^r (\hat{x}_u - \hat{x}_s )^j  \D x_u \right)   \otimes \D x_r \right ],
\end{split}
\end{equation*} 
where $y_t := \int_{0}^{t} f( \hat{x}_r) \D x_r$, $i,j,k$ are multi-indices, and we use the following notation: 
\[
|i| := \sum_{l=1}^e i_l , 
\quad i! := \prod_{l=1}^e i_l!, 
\quad x^i := \prod_{l=1}^e (x_l)^{i_l}, 
\quad \partial^{i} : = \prod_{l=1}^e \left (\frac{\partial}{\partial x_l} \right)^{i_l}
\]
for $i=(i_1,...,i_e), \ x = (x_1,...,x_e)$.
Therefore, following the idea of rough path theory, 
we would be able to define a rough path integral $\int f(\hat{x}_r)  \D x_r$
if we could define 
\[
X^{(i)}_{st} : = \frac{1}{i!} \int_{s}^{t} (\hat{X}_{sr})^i  \D x_r
, \quad \mathbf{X}^{(jk)}_{st} := \frac{1}{k!} \int_{s}^{t} (\hat{X}_{sr})^{k} X^{(j)}_{sr} \otimes  \D x_r
\]
for $\hat{X}_{sr} : = \hat{x}_r  - \hat{x}_s$.
By the linearity of the integration and the binomial theorem (see Section~8.1 in \cite{Foll}), $X^{(i)}$ and $\mathbf{X}^{(jk)}$ should satisfy the following formulas respectively:
for any $i,j,k \in \mathbb{Z}^e_{+}$ and $s \leqq u \leqq t$, 
\begin{equation}\label{chen1}
X^{(i)}_{st} = X^{(i)}_{su} + \sum_{p \leqq i} \frac{1}{(i-p)!} (\hat{X}_{su})^{i-p} X^{(p)}_{ut}
\end{equation}
and
\begin{equation}\label{chen2}
\begin{split}
\mathbf{X}^{(jk)}_{st} 
= \mathbf{X}^{(jk)}_{su}  & + \sum_{q \leqq k} \frac {1}{(k-q)!}  (\hat{X}_{su} )^{k-q} X^{(j)}_{su} \otimes X^{(q)}_{ut}
\\ &+\sum_{p \leqq j} \sum_{q\leqq k} \frac{1}{(j-p)! (k-q)!}  (\hat{X}_{su} )^{j+k-p-q} \mathbf{X}^{(pq)}_{ut},
\end{split}
\end{equation}
where, for $i,j \in \mathbb{Z}^e_{+}$, $i\leqq j$ means for all $l \in \{1,...,e\}$, $i_l \leqq j_l$, and  $\mathbb{Z}_+$ is the set of the nonnegative integers. 
Our partial rough space is a space for $\hat{X}$, $X^{(i)}$ and $\mathbf{X}^{(jk)}$, where
the formulas \eqref{chen1} and \eqref{chen2}
should play the role of Chen's identity. 

In Section~2, we formulate such a partial rough path space and state some fundamental properties including the continuity of the integration map. In Section~3, we construct a rough path lift of our rough volatility model and state an LDP. Proofs are relegated to Section~4.

\section{A partial rough path space}
\subsection{Definition}
Throughout this article, we fix $\alpha \in  (\frac{1}{3},\frac{1}{2}]$, $\beta \in  (0,\frac{1}{2})$, $T>0$ and denote 
\[
\Delta_T := \{(s,t) | 0 \leqq s \leqq t \leqq  T \}, \quad I:= \{ i \in \mathbb{Z}^e_+ | \  |i| \beta +\alpha \leqq 1 \},
\]
and 
\[
J := \{ (j,k) \in \mathbb{Z}^e_+ \times \mathbb{Z}^e_+ | \  |j+k|\beta +2\alpha \leqq 1 \}.
\]
Extending the notion of an $\alpha$-\hd rough path in rough path theory,
here we define an $(\alpha,\beta)$ rough path.
\begin{definition}
An $(\alpha,\beta)$ rough path $\mathbb{X} = \left (\hat{X}, X^{(i)}, \mathbf{X}^{(jk)} \right)_{i \in I , (j,k)\in J}$ is a triplet of functions on $\Delta_T$ satisfying the following conditions for any $i \in I, (j,k) \in J$, and $s \leqq u \leqq t$.
\begin{enumerate}
\item[(i)] $\hat{X}$ is $\mathbb{R}^{e}$-valued, $X^{(i)}$ is $\mathbb{R}^d$-valued, and $\mathbf{X}^{(jk)}$ is $\mathbb{R}^d \otimes \mathbb{R}^d$-valued.
\item[(ii)] \textit{Modified Chen's relation}:  $\hat{X}_{st} = \hat{X}_{su} + \hat{X}_{ut}$, 
and $ X^{(i)}$ and $ \mathbf{X}^{(jk)}$ satisfy ($\ref{chen1}$) and ($\ref{chen2}$), respectively.
\item[(iii)] \textit{\hd regularity}:
\begin{equation*}
|\hat{X}_{st}| \lesssim |t-s|^{\beta}, \quad |X^{(i)}_{st}| \lesssim |t-s|^{|i|\beta + \alpha}, \quad |\mathbf{X}^{(jk)}_{st}| \lesssim |t-s|^{|j+k|\beta + 2\alpha}.
\end{equation*}
\end{enumerate}
Let $\Omega_{(\alpha ,\beta) \text{-Hld}}$ denote the set of $(\alpha,\beta)$ rough paths.
We define a metric function $d_{(\alpha,\beta)}$ on $\Omega_{(\alpha,\beta) \text{-Hld}}$ and a homogeneous norm $|||\mathbb{X}|||_{(\alpha,\beta)}$ respectively by
\begin{align*}
&d_{(\alpha,\beta)} (\mathbb{X},\mathbb{Y}) \\
&:=  ||\hat{X} - \hat{Y}||_{\beta \text{-Hld}} + \! \sum_{i \in I , (j,k) \in J} ||X^{(i)} - Y^{(i)}||_{|i|\beta + \alpha \text{-Hld}} + ||\mathbf{X}^{(jk)} - \mathbf{Y}^{(jk)}||_{|j+k|\beta +2\alpha \text{-Hld}}
\end{align*}
and
\begin{align*}
&|||\mathbb{X}|||_{(\alpha, \beta)} \\
&:=  ||\hat{X}||_{\beta \text{-Hld}} + \sum_{i \in I , (j,k) \in J} \left ( ||X^{(i)}||_{|i|\beta+\alpha \text{-Hld}} \right )^{1/(|i|+1)} + \left ( ||\mathbf{X}^{(jk)}||_{|j+k|\beta +2\alpha \text{-Hld}} \right)^{1/(|j+k|+2)},
\end{align*} 
where $\| \cdot \|_{\gamma \text{-Hld}}$ is the $\gamma$-\hd norm for two-parameter functions for $\gamma \in (0,1]$:
 \[
||X||_{\gamma\text{-Hld} } :=   \sup_{0\leqq s<t \leqq T} \frac{|X_{st}|}{|t-s|^{\gamma}}.
\]
\end{definition}

\begin{remark}
The modified Chen's relation and the \hd regularity of
$X^{(i)}$ and $\mathbf{X}^{(jk)}$ are from the following correspondence:
\begin{equation*}
X^{(i)}_{st} \leftrightarrow \frac{1}{i!} \int_{s}^{t} \left ( \hat{X}_{sr} \right)^{i} \D X^{(0)}_r, \quad \mathbf{X}^{(jk)}_{st} \leftrightarrow \frac{1}{k!} \int_{s}^{t} \left (\hat{X}_{sr} \right)^{k} 
X^{(j)}_{sr}  \otimes \D X^{(0)}_r
\end{equation*}
when $X^{(0)}$ and $\hat{X}$ have \hd regularity $\alpha$ and $\beta$, respectively.
Note also that $\left(X^{(0)},\mathbf{X}^{(00)}\right)$ is an $\alpha$-\hd rough path with the first level
$X^{(0)}$ and the second level $ \mathbf{X}^{(00)}$ in the usual rough path terminology. An $(\alpha,\beta)$ rough path has two first-level paths: $X^{(0)}$ and $\hat{X}$. 
\end{remark}

\begin{remark}
Our modified Chen's relation is a particular form of the algebraic structure of branched rough paths studied in \cite{Gubi2}. However, because $\hat{X}$ is not a controlled path of $X$, the novel framework of $(\alpha,\beta)$ rough paths is essential for establishing the rough path integral stated in the Introduction. 
\end{remark}

\begin{remark}[A comparison with \cite{BaFrGaMaSt}]
The iterated integral $X^{(i)}_{st} = \frac{1}{i!}\int_{s}^{t} \left ( \hat{X}_{sr} \right)^{i} \D X^{(0)}_r$
plays a key role also in \cite{BaFrGaMaSt} (see section 3.1 in \cite{BaFrGaMaSt}, where $X^{(i)}_{st} = \mathbb{W}^{i}_{st}$ in their notation). In \cite{BaFrGaMaSt}, its derivative $\frac{\D}{\D t} \mathbb{W}^{i}_{st}$ appears in the structure space of  regularity structure.
Our $(\alpha,\beta)$ rough path consists of not only $X^{(i)}_{st}$ but also $\mathbf{X}^{(jk)}_{st}$. The latter is required to construct a rough path integral as an element of a rough path space, while in \cite{BaFrGaMaSt} the corresponding integral is constructed as merely a distribution and such terms as $\mathbf{X}^{(jk)}_{st}$ are not necessary for that purpose.
As mentioned in Introduction, the key to treat \eqref{rvm} with a general function $\sigma$ is to construct $\int f(\hat{X}_t,t) \D X_t$ as an element of a rough path space.
\end{remark}

\subsection{$(\alpha,\beta)$ rough path integration}

Extending the rough path integration, here we introduce an integration with respect to an $(\alpha,\beta)$ rough path.
 \begin{definition}
Fix $\mathbb{X} \in \Omega_{(\alpha,\beta) \text{-Hld}}$. We define $Y^{(1)}$ and $Y^{(2)}$ as follows if they exist:
\begin{equation*}
Y^{(1)}_{st} := \lim_{|\mathcal{P}| \searrow 0} \sum_{p=1}^{N} \sum_{i \in I} \partial^i f(\hat{x}_{t_{p-1}}) X^{(i)}_{t_{p-1}t_{p}}, 
\end{equation*}
\begin{equation*}
Y^{(2)}_{st}  := \lim_{|\mathcal{P}| \searrow 0} \sum_{p=1}^{N} \left ( Y^{(1)}_{t_{0} t_{p-1}} \otimes Y^{(1)}_{t_{p-1} t_{p}}  +\!\!\!\!\! \sum_{ (j,k) \in J} \!\! \partial^j f(\hat{x}_{t_{p-1}}) \partial^k f(\hat{x}_{t_{p-1}}) \mathbf{X}^{(jk)}_{t_{p-1}t_{p}} \right ),
\end{equation*}
where
$\hat{x}_s := \hat{X}_{0s}$, and $\mathcal{P} = \{ s= t_0 < t_1 < ... <t_N = t   \}$ is a partition of the interval $[s,t]$. 
The mesh size $|\mathcal{P}|$ is defined by $|\mathcal{P}| = \max_p |t_p-t_{p-1}|$.
If they exist on $\Delta_T$, we denote $(Y^{(1)}, Y^{(2)})$ by $\int f(\hat{\mathbb{X}}) \D {\mathbb{X}} $, and we call this the $(\alpha, \beta)$ rough path integral of $f$.\\
\end{definition}

Denote by
$\Omega_{\alpha \text{-Hld}}$ the $\alpha$-H\"older rough path space,
and denote by $d_{\alpha}$  the metric function on $\Omega_{\alpha \text{-Hld}}$; see \cite{FH}, for example. 
Here, we state our first main result, 
the proof of which is given in Section~\ref{pr1}.

\begin{theorem}\label{reconst}
Let $n:= \max \{|i| : i \in I\}$ and assume that $f : \mathbb{R} \rightarrow \mathbb{R}$ is $C^{n+2}$. 
\begin{enumerate}
\item[(i)] For any $\mathbb{X} \in \Omega_{(\alpha,\beta) \text{-Hld}}$, the $(\alpha, \beta)$ rough path integral $\int f(\hat{\mathbb{X}}) \D \mathbb{X}$  is well-defined, and  $\int f(\hat{\mathbb{X}}) \D \mathbb{X} \in \Omega_{\alpha \text{-Hld}}$.

\item[(ii)] The integration map $\int : \Omega_{(\alpha,\beta) \text{-Hld}} \rightarrow \Omega_{\alpha \text{-Hld}}$ is locally Lipschitz continuous. More precisely, for any $M >0$, the map $\int |_{\mathcal{E}_M}$, restricted on the set 
\begin{equation*}
\mathcal{E}_M := \left \{ \mathbb{X} \in \Omega_{(\alpha,\beta) \text{-Hld}} | \ \  |||\mathbb{X}|||_{(\alpha, \beta)} \leqq  M \right \},
\end{equation*}
is Lipschitz continuous; that is, there exists a positive constant $C>0$ such that 
\begin{equation*}
 d_{\alpha} \left (\int f(\hat{\mathbb{V}}) \D \mathbb{V}, \int f(\hat{\mathbb{W}}) \D \mathbb{W} \right) \leqq C d_{(\alpha, \beta)} \left (\mathbb{V},\mathbb{W} \right), \quad  \mathbb{V},\mathbb{W} \in \mathcal{E}_M.
\end{equation*}
\end{enumerate}
\end{theorem}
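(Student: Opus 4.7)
The plan is to apply the sewing lemma twice, once to build the first level $Y^{(1)}$ and once the second level of the target $\alpha$-H\"older rough path, and then to rerun each estimate for two data $\mathbb{V}, \mathbb{W} \in \mathcal{E}_M$ to obtain the Lipschitz estimate. The two core inputs are the modified Chen relations (\ref{chen1})--(\ref{chen2}) and multivariate Taylor expansions of $f$ and of products of its derivatives, truncated precisely at the orders prescribed by $I$ and $J$.

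For (i), first level, I would set $\Xi^{(1)}_{st} := \sum_{i \in I} \partial^i f(\hat{x}_s) X^{(i)}_{st}$. Applying (\ref{chen1}) and reindexing $q = i-p$ yields $\Xi^{(1)}_{st} - \Xi^{(1)}_{su} = \sum_{p \in I} X^{(p)}_{ut} \sum_{q : p+q \in I} (\hat{X}_{su})^q \partial^{p+q} f(\hat{x}_s)/q!$. The inner sum is the truncated Taylor polynomial of $\partial^p f$ centred at $\hat{x}_s$ evaluated at $\hat{x}_u$, so it equals $\partial^p f(\hat{x}_u)$ modulo a remainder of order $|u-s|^{(N_p+1)\beta}$, where $N_p$ is the maximal order for which $p+q \in I$. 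Hence $|\delta \Xi^{(1)}_{sut}| \lesssim \sum_p |t-u|^{|p|\beta+\alpha}|u-s|^{(N_p+1)\beta}$, and the total H\"older exponent strictly exceeds $1$ by the definition of $I$. The sewing lemma (see \cite{FH}) then produces $Y^{(1)}$ with $|Y^{(1)}_{st}| \lesssim |t-s|^\alpha$ and $|Y^{(1)}_{st} - \Xi^{(1)}_{st}| = O(|t-s|^{1+\varepsilon})$, and the Riemann sums defining $Y^{(1)}$ are Cauchy with the same quantitative bounds and hence converge to it.

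For the second level, set $\Xi^{(2)}_{st} := \sum_{(j,k) \in J} \partial^j f(\hat{x}_s)\partial^k f(\hat{x}_s) \mathbf{X}^{(jk)}_{st}$. A single refinement of the Riemann sum defining $Y^{(2)}_{st}$ by insertion of an intermediate point $u$ changes the sum by exactly $Y^{(1)}_{su} \otimes Y^{(1)}_{ut} - \delta \Xi^{(2)}_{sut}$, so it suffices to show this quantity is $O(|t-s|^{1+\varepsilon})$. Expanding $\delta \Xi^{(2)}_{sut}$ via (\ref{chen2}) produces a cross term $\sum_{(j,k) \in J} \partial^j f(\hat{x}_s)\partial^k f(\hat{x}_s)\sum_{q \leqq k}(\hat{X}_{su})^{k-q} X^{(j)}_{su} \otimes X^{(q)}_{ut}/(k-q)!$, which after the same Taylor reindexing as above matches $\Xi^{(1)}_{su} \otimes \Xi^{(1)}_{ut}$, hence $Y^{(1)}_{su} \otimes Y^{(1)}_{ut}$ up to sewing error; the remaining pieces, after Taylor expansion of the product $\partial^j f \cdot \partial^k f$ at $\hat{x}_u$ around $\hat{x}_s$, give remainders of H\"older order strictly above $1$ by the definition of $J$. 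The Cauchy property of the Riemann sums follows, yielding $Y^{(2)}$ with $|Y^{(2)}_{st}| \lesssim |t-s|^{2\alpha}$ and the Chen relation $Y^{(2)}_{st} - Y^{(2)}_{su} - Y^{(2)}_{ut} = Y^{(1)}_{su} \otimes Y^{(1)}_{ut}$ built in.

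For (ii), all the bounds above are multilinear in the germs, defects, and Taylor remainders. Splitting every product via $ab - a'b' = (a-a')b + a'(b-b')$ makes each factor either bounded by a polynomial in $M$ via $|||\mathbb{X}|||_{(\alpha,\beta)} \leqq M$ or contribute a factor of $d_{(\alpha,\beta)}(\mathbb{V},\mathbb{W})$, and the quantitative sewing lemma promotes germ-level differences to $\alpha$-H\"older bounds on the differences of $Y^{(1)}$ and $Y^{(2)}$. The main obstacle I anticipate is the second-level defect analysis: the two independent multi-indices in (\ref{chen2}) produce three distinct correction sums, and matching the cross term exactly against $Y^{(1)}_{su} \otimes Y^{(1)}_{ut}$ modulo higher-order remainders requires careful combinatorial bookkeeping of two layers of Taylor expansion. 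This is the combinatorial heart of the theorem and pins down the precise thresholds in $I$ and $J$.
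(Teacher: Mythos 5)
Your proposal is correct and follows essentially the same route as the paper: the germs $\Xi^{(1)},\Xi^{(2)}$ coincide with the paper's $J^{(1)},J^{(2)}$, the defect computation via the modified Chen relations plus truncated Taylor expansion (with the cross term in \eqref{chen2} matched against $Y^{(1)}_{su}\otimes Y^{(1)}_{ut}$ and the leftover $I\times I\setminus J$ and remainder terms of H\"older order above $1$) is exactly the paper's decomposition into $S_1+S_2$, and the Lipschitz estimate by splitting products as $ab-a'b'=(a-a')b+a'(b-b')$ is the paper's Claims~3--4. The only cosmetic difference is that you invoke the sewing lemma where the paper reproves it by hand via the maximal-point-removal argument of Lemma~\ref{itten}.
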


\section{Large deviation}
\subsection{A lift to the partial rough path space}
We now construct an $(\alpha, \beta)$ rough path, which plays an important role in this paper. 
For notational simplicity we focus on a low dimensional case
(both $\kappa$ and $W$ below are one-dimensional)
but extensions to higher dimensional cases are straightforward.
The proof is deferred to Section~\ref{proof_lift}.
Let $\kappa :(0,T] \rightarrow [0 , \infty) $ as
\begin{align*}
    \kappa (t) := g(t) t^{\zeta - \gamma}, \quad t \in (0,T], 
\end{align*}
where $\gamma,\zeta \in(0,1)$ and $g$ is a Lipschitz function. For example, the Riemann--Liouville kernel 
\begin{align}\label{RLk}
      \kappa_H (t) := \frac{t^{H-1/2}}{\Gamma{(H+1/2)}}, \quad  t \in (0,T], \ H \in (0,1/2)
\end{align}
has the above form ($\zeta = H - \delta, \gamma = 1/2 - \delta$, $g(t) = 1/ \Gamma{(H+1/2)} $, where $\delta \in (0,1/2)$). 
For $\alpha \in (0,1]$, let $C^{\alpha \text{-Hld}}$ denote the space of $\alpha$-\hd continuous functions on $[0,T]$.
Let $\mathcal{K}: C^{\gamma \text{-Hld}} \rightarrow C^{\zeta \text{-Hld}}$ as
\begin{align*}
   \mathcal{K} f(t) 
& := \lim_{\epsilon \to 0} \left \{ \left[\kappa(t-\cdot) (f(\cdot) - f(t)) \right]^{t-\epsilon}_0 
+ \int_0^{t-\epsilon} (f(s) - f(t)) \kappa'(t-s) \D s \right\}\\
& = \kappa(t) (f(t) - f(0)) + \int_0^t (f(s)- f(t) ) \kappa' (t-s) \D s. 
\end{align*}

\begin{proposition}\label{lift}
Let $(\Omega, \mathcal{F}, \mathbb{P}, \{ \mathcal{F}_t \}_{t\geqq 0})$ be a filtered probability space, and fix $\alpha \in (1/3,1/2]$, $\beta \in (0,1/2)$, and $\gamma, \zeta \in (0,1)$ with $\gamma < 1/2$, $\beta < \zeta $. Suppose that $X = (X^1, ... ,X^d)$ is a $d$-dimensional (possibly correlated) Brownian motion, and $W$ is a one-dimensional Brownian motion possibly correlated to $X$.
Using the It\^o integration, define $\hat{X}$, $X^{(i)}$, and $\mathbf{X}^{(jk)}$ as follows: for $(s,t) \in \Delta_T$, $i \in I$ and $(j,k) \in J$, 
\begin{equation*}
\hat{X}^{(1)}_{st} := \mathcal{K} W(t) - \mathcal{K} W(s),
\end{equation*}
\begin{align*}
\hat{X}^{(2)}_{st} := t^{\zeta} -s^{\zeta},  
\end{align*}
\begin{equation*}
X^{(i)}_{st} := \frac{1}{i!} \int_{s}^{t} \left (\hat{X}_{sr}  \right)^i  \D X_r, \quad 
\mathbf{X}^{(jk)}_{st} := \frac{1}{k!} \int_{s}^{t}  \left (\hat{X}_{sr}  \right)^k X^{(j)}_{sr} \otimes \D X_r.
\end{equation*}
Let $\kappa_{st}(r) := \left( \kappa(t-r)  - \kappa(s-r)  1_{(0,s)}(r) \right) 1_{(0,t)} (r)$ and assume that
\[
||\kappa_{st} ||^2_{L^2(\mathbb{R}_+)} \lesssim C |t-s|^{2 (\zeta - \gamma) +1 }.
\]
Then we have the following.
\begin{enumerate}
\item[(i)] For a.s.\ $\omega \in \Omega$, $\mathbb{X}(\omega) := \left ( \hat{X}(\omega), X^{(i)}(\omega), \mathbf{X}^{(jk)}(\omega) \right)_{i \in I, (j,k) \in J}$ is an $(\alpha, \beta)$ rough path.
\item[(ii)]
It holds that 
\begin{equation*}
\left( \int f(\hat{\mathbb{X}}) \D {\mathbb{X}} \right )_{0t}^{(1)} = \int_{0}^{t} f(\hat{X}_{0r})  \D X_r \quad a.s., 
\end{equation*} 
where the left-hand side is the first level of the $(\alpha,\beta)$ rough path integral and the right-hand side is the It\^o  integral.
\end{enumerate}
\end{proposition}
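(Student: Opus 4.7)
The plan is to establish part (i) in two stages—first the algebraic modified Chen relations and then the probabilistic \hd bounds via a Kolmogorov-type criterion—and to deduce part (ii) by Taylor-expanding $f$ inside the It\^o Riemann sums and comparing them with the sums defining $Y^{(1)}$.

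For the Chen relations, additivity of $\hat X$ is immediate since both $\hat X^{(1)}_{st}=\mathcal KW(t)-\mathcal KW(s)$ and $\hat X^{(2)}_{st}=t^\zeta-s^\zeta$ are differences of one-parameter functions. For $X^{(i)}$ and $\mathbf X^{(jk)}$, the identities \eqref{chen1} and \eqref{chen2} follow by inserting the binomial expansion $(\hat X_{sr})^{i}=\sum_{p\le i}\binom{i}{p}(\hat X_{su})^{i-p}(\hat X_{ur})^{p}$, valid for $s\le u\le r$, into the defining It\^o integrals and using linearity—a routine computation parallel to the classical Chen derivation. For the \hd regularity I would apply the Kolmogorov continuity criterion for two-parameter functions on $\Delta_T$, supplying moment bounds in three layers. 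First, since $\hat X^{(1)}_{st}$ is centered Gaussian with variance $\|\kappa_{st}\|_{L^2}^{2}\lesssim|t-s|^{2(\zeta-\gamma)+1}$, every moment satisfies $\mathbb E|\hat X^{(1)}_{st}|^{p}\lesssim|t-s|^{p(\zeta-\gamma+1/2)}$; combined with the deterministic $\zeta$-\hd bound on $\hat X^{(2)}$ and the hypothesis $\gamma<1/2$, this yields $\beta$-\hd regularity of $\hat X$ for every $\beta<\zeta$. Second, the BDG inequality and Jensen give
\[
\mathbb E\bigl|X^{(i)}_{st}\bigr|^{p}\lesssim |t-s|^{p/2-1}\int_{s}^{t}\mathbb E|\hat X_{sr}|^{p|i|}\,dr\lesssim |t-s|^{p(|i|\beta'+1/2)}
\]
for any $\beta'<\zeta$, and choosing $\beta'>\beta$ together with $\alpha\le 1/2$ and $p$ large produces the desired $(|i|\beta+\alpha)$-\hd regularity. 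Third, another round of BDG, now plugging in the moment estimates just obtained for $X^{(j)}_{sr}$, yields $\mathbb E|\mathbf X^{(jk)}_{st}|^{p}\lesssim |t-s|^{p(|j+k|\beta'+1)}$ and hence the required $(|j+k|\beta+2\alpha)$-\hd regularity.

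For part (ii), I would write
\[
\int_{0}^{t} f(\hat X_{0r})\,\D X_r=\sum_{p=1}^{N}\sum_{i\in I}\partial^{i}f(\hat X_{0,t_{p-1}})X^{(i)}_{t_{p-1}t_{p}}+\sum_{p=1}^{N}R_{p}
\]
over a partition $\mathcal P=\{0=t_0<\dots<t_N=t\}$, where $R_p$ is the It\^o integral of the order-$n$ Taylor remainder of $f$ around $\hat X_{0,t_{p-1}}$. Because $f\in C^{n+2}$, the integrand in $R_p$ is bounded by $C|\hat X_{t_{p-1}r}|^{n+1}$, so BDG together with the Gaussian moment bounds from the first layer above give $\mathbb E|R_p|^{2}\lesssim |t_p-t_{p-1}|^{2(n+1)\beta'+1}$. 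The $R_p$ are orthogonal $\mathcal F_{t_p}$-martingale increments, hence $\mathbb E\bigl|\sum_{p} R_p\bigr|^{2}=\sum_{p}\mathbb E|R_p|^{2}\lesssim|\mathcal P|^{2(n+1)\beta'}\to 0$ as $|\mathcal P|\to 0$, so the Riemann sum converges in $L^{2}$, and hence in probability, to the It\^o integral, while by Theorem~\ref{reconst} applied to the rough path built in part (i) it also converges almost surely to $\bigl(\int f(\hat{\mathbb X})\,\D\mathbb X\bigr)^{(1)}_{0t}$, giving the claimed identity. The main obstacle is the \hd exponent bookkeeping in the second and third layers of part (i): each exponent produced by BDG plus Jensen must strictly exceed the target $|i|\beta+\alpha$ or $|j+k|\beta+2\alpha$ by a definite amount so that Kolmogorov applies, which forces careful use of the constraints $\beta<\zeta$, $\gamma<1/2$ and $\alpha\le 1/2$ together with the definitions of $I$ and $J$.
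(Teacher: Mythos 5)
Your proposal is correct and follows the paper's overall architecture (binomial theorem for the modified Chen relations, two--parameter Kolmogorov criterion for the \hd bounds, and $L^2$ convergence of compensated Riemann sums for (ii)), but the engine behind the moment estimates is genuinely different. Where you use BDG plus Jensen to get $\mathbb{E}|X^{(i)}_{st}|^p \lesssim |t-s|^{p(|i|\zeta+1/2)}$ and likewise for $\mathbf{X}^{(jk)}$, the paper instead expands the powers $(\hat{X}^{(1)}_{sr})^{m}$ into finite Wiener chaoses via the product formula (Lemma~\ref{wiener}) and then applies hypercontractivity (Lemma~\ref{hyper}) to pass from $L^2$ to $L^p$. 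Both routes yield the exponents needed for Kolmogorov, and yours is the more elementary; what the chaos route buys is the explicit polynomial dependence on $p$, namely $\|X^{(i)}_{st}\|_p \lesssim p^{(i_1+1)/2}|t-s|^{|i|\zeta+1/2}$, which is not about Proposition~\ref{lift} itself but is reused verbatim in Lemma~\ref{inverse}(i) to prove Gaussian-type exponential integrability of $|||\mathbb{X}|||_{(\alpha,\beta)}$ and hence exponential tightness for the LDP; the BDG constants grow too fast in $p$ to recover that. For (ii) your argument is essentially the paper's seen from the other side: rather than Taylor-expanding $f(\hat{X}_{0r})$ inside the It\^o integral and estimating the remainder martingale $\sum_p R_p$, the paper starts from the known convergence of $\sum_q f(\hat{x}_{t_{q-1}})X^{(0)}_{t_{q-1}t_q}$ to the It\^o integral and shows each extra term $\sum_q \partial^i f(\hat{x}_{t_{q-1}})X^{(i)}_{t_{q-1}t_q}$, $i\neq 0$, vanishes in $L^2$ by orthogonality of martingale increments and $\mathbb{E}|X^{(i)}_{st}|^2\lesssim|t-s|^{2|i|\zeta+1}$; this avoids the Taylor remainder estimate altogether (and, like the paper, you should record the harmless reduction to $f\in C_b^{n+2}$ by localization). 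Your closing remark about the exponent bookkeeping is exactly the right caution, and the constraints $\beta<\zeta$, $\gamma<1/2$, $\alpha\leqq 1/2$ do close it in the same way as in the paper.
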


\subsection{The large deviation principle on $\Omega_{(\alpha,\beta) \text{-Hld}}$}

We now discuss the LDP on $\Omega_{(\alpha,\beta) \text{-Hld}}$.
Following \cite{jacquier, GeJaPaStWa}, we use Garcia's theorem ~\cite{garcia}.
Let $(W, W^{\perp})$ be a two-dimensional standard Brownian motion and $X := \rho W + \sqrt{1-\rho^2} W^{\perp}, \ \rho \in [-1,1]$.
Define $\hat{X}, X^{(i)},\mathbf{X}^{(jk)}$ as in Proposition~\ref{lift} with $d =1$, $e=2$ . 
We state our second main result, the proof of which is given in Section~\ref{pr3}.

\begin{thm}\label{ldp}
 Let $\mathbb{X} =(  \hat{X}, X^{(i)}, \mathbf{X}^{(jk)})$ be the random variable taking values on ($\Omega_{(\alpha,\beta) \text{-Hld}},d_{(\alpha,\beta)}$) defined as above.  
 Then, the sequence of triplets 
 $$\mathbb{X}^{\epsilon} :=\left( \epsilon^{1/2} \hat{X}, \epsilon^{(|i|+1)/2} X^{(i)}, \epsilon^{(|j+k|+2)/2}\mathbf{X}^{(jk)}\right) $$ 
 satisfies the LDP on  ($\Omega_{(\alpha,\beta) \text{-Hld}},d_{(\alpha,\beta)}$) with speed $\epsilon^{-1}$ with good rate function 
\begin{align*} 
\tilde{I}^{\#\#}(\hat{x}, x^{(i)}, \mathbf{x}^{(jk)}) 
: = \inf \left \{\tilde{I}^{\#} (\tilde{v}) \middle| \tilde{v} \in \mathcal{H}, \  (\hat{x},x^{(i)}, \mathbf{x}^{(jk)} ) = \mathbb{L} \circ \mathbb{K}(\tilde{v}) \right \},
\end{align*}
where $\mathcal{H}$ is the Cameron--Martin space from $[0,T]$ to $\mathbb{R}^2$,  
\[
\mathbb{K} (\tilde{v}) 
: = \left( \left( \int_0^{\cdot} \kappa(\cdot - r ) \D \tilde{v}^{(1)}_r, 0\right), \rho \tilde{v}^{(1)} + \sqrt{1-\rho^2} \tilde{v}^{(2)} \right)
\]
and 
\[
\mathbb{L}(u,v) := ( \delta u, u \cdot v, u \ast v ), \quad u,v \in C_{[0,T]}, \ v \in  \mathcal{H},
\]
$\delta u_{st} := u_t - u_s$, $u \cdot v = (u \cdot_i v)$, $u \ast v = (u \ast_{jk} v)$, and 
\[
(u \cdot_i v)_{st} := \int_s^t (u_r-u_s)^i \D v_r, \quad (u \ast_{jk} v)_{st} := \int_s^t (u\cdot_j v)_{sr} (u_r-u_s)^k \D v_r.
\]
Here, $\tilde{I}^{\#}:C\rightarrow [0,\infty)$ is the rate function of two-dimensional Brownian motion:
\[
\tilde{I}^{\#} (\tilde{v}) : = \begin{cases}
\frac{1}{2} || \tilde{v} ||^2_{\mathcal{H}}, &  \tilde{v} \in \mathcal{H}, \\
\infty, &  \text{otherwise}.
\end{cases}
\]
\end{thm}

\begin{theorem}\label{thm:34}
The sequence of the processes $\left\{Y^{\epsilon} := \int f(\hat{\mathbb{X}}^{\epsilon})\D \mathbb{X}^{\epsilon} \right\}_{\epsilon \geqq 0}$ satisfies the LDP on $(\Omega_{\alpha \text{-Hld}}, d_{\alpha})$ with speed $\epsilon^{-1}$ with good rate function
\begin{align*}
\tilde{I}^{\#\#\#} (y) 
&:= \inf \left \{ \tilde{I}^{\#\#} (\mathbb{X}) \middle|\  \mathbb{X} \in \Omega_{(\alpha,\beta) \text{-Hld}}, \  y =  \int f(\hat{\mathbb{X}}) \D \mathbb{X} \right \}\\
& = \inf \left  \{\tilde{I}^{\#} (\tilde{v}) \middle |  \tilde{v} \in \mathcal{H},\ (u, v) = \mathbb{K} (\tilde{v}), \  y = \int f (\hat{\mathbb{L}}(u,v)) \D \mathbb{L}(u,v) \right \},
\end{align*}
where $\tilde{I}^{\#\#}$ is defined in Theorem $\ref{ldp}$. 
\end{theorem}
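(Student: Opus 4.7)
The plan is to deduce this theorem directly from Theorem~\ref{ldp} and Theorem~\ref{reconst}(ii) by the contraction principle, so the overall structure is quite short. Write $\Phi : \Omega_{(\alpha,\beta)\text{-Hld}} \to \Omega_{\alpha\text{-Hld}}$ for the integration map $\mathbb{X} \mapsto \int f(\hat{\mathbb{X}})\D\mathbb{X}$, so that by definition $Y^{\epsilon} = \Phi(\mathbb{X}^{\epsilon})$ and the statement becomes a pushforward assertion.

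First I would upgrade the local Lipschitz continuity of $\Phi$ from Theorem~\ref{reconst}(ii) to genuine continuity on the whole metric space $(\Omega_{(\alpha,\beta)\text{-Hld}}, d_{(\alpha,\beta)})$. The key observation is that if $d_{(\alpha,\beta)}(\mathbb{X}_n, \mathbb{X}) \to 0$, then the triangle inequality applied to each H\"older seminorm appearing in the homogeneous norm $|||\cdot|||_{(\alpha,\beta)}$ shows that the sequence $(\mathbb{X}_n)$ is bounded in that norm, so it lies in some $\mathcal{E}_M$ together with the limit $\mathbb{X}$. The Lipschitz bound on $\mathcal{E}_M$ then yields $d_{\alpha}(\Phi(\mathbb{X}_n), \Phi(\mathbb{X})) \to 0$, giving continuity everywhere.

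With continuity of $\Phi$ in hand, the contraction principle applies directly: since Theorem~\ref{ldp} provides an LDP for $\mathbb{X}^{\epsilon}$ on $(\Omega_{(\alpha,\beta)\text{-Hld}}, d_{(\alpha,\beta)})$ at speed $\epsilon^{-1}$ with good rate function $\tilde{I}^{\#\#}$, the pushforward $Y^{\epsilon} = \Phi(\mathbb{X}^{\epsilon})$ satisfies an LDP on $(\Omega_{\alpha\text{-Hld}}, d_{\alpha})$ at the same speed with good rate function
\[
y \mapsto \inf \left\{ \tilde{I}^{\#\#}(\mathbb{X}) :\ \mathbb{X} \in \Omega_{(\alpha,\beta)\text{-Hld}},\ y = \Phi(\mathbb{X}) \right\},
\]
which is the first expression in the statement. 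The second expression follows by substituting the definition of $\tilde{I}^{\#\#}$ from Theorem~\ref{ldp} and interchanging the two infima: the double infimum over $\mathbb{X}$ and $\tilde{v}$ constrained by $\mathbb{X} = \mathbb{L}\circ\mathbb{K}(\tilde{v})$ collapses to a single infimum over $\tilde{v} \in \mathcal{H}$, with $(u,v) = \mathbb{K}(\tilde{v})$ and the constraint rewritten as $y = \int f(\hat{\mathbb{L}}(u,v))\D\mathbb{L}(u,v)$.

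In this plan there is essentially no substantial obstacle: the only step requiring care is the continuity upgrade above, and even that is a soft argument once one observes that $d_{(\alpha,\beta)}$-convergent sequences are automatically bounded in the homogeneous norm. All the substantive analytic work, namely the stability of $\Phi$ on bounded sets and the LDP on the partial rough path space itself, has already been carried out in Theorem~\ref{reconst} and Theorem~\ref{ldp}, and this final step is a clean application of the contraction principle.
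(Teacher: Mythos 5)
Your proposal is correct and follows exactly the paper's route: the paper's proof is the one-line observation that Theorems~\ref{reconst} and \ref{ldp} combined with the contraction principle give the claim. Your additional remark that $d_{(\alpha,\beta)}$-convergent sequences are bounded in $|||\cdot|||_{(\alpha,\beta)}$ (so that the local Lipschitz estimate on $\mathcal{E}_M$ upgrades to continuity of the integration map) is a valid and worthwhile detail that the paper leaves implicit.
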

\begin{proof}
By Theorems $\ref{reconst}$ and \ref{ldp} together with the contraction principle, we have the claim. 
\end{proof}

\subsection{Rough differential equation driven by an $(\alpha,\beta)$ rough path integral and the Freidlin--Wentzell LDP}
We now discuss the following type of rough differential equation (RDE) (in Lyons' sense; see Section~8.8 of \cite{FH}, for example):
\begin{equation}\label{RDE}
 \bar{S}_t =  \int_0^t \bar{\sigma}(\bar{S}_u) \D Y_u,
\end{equation}
where $\bar{S}_t=S_t-S_0$, $\bar{\sigma}(s)= \sigma(S_0 + s)$ and
\begin{equation}\label{RDEY}
     \quad Y =  \int f(\hat{\mathbb{X}}) \D \mathbb{X}  \in \Omega_{\alpha \text{-Hld}}([0,T], \mathbb{R}^d), \ \ \mathbb{X} \in \Omega_{(\alpha,\beta)\text{-Hld}}.
\end{equation}
\begin{thm} \label{thm:33}
Let $\sigma \in C^{3}_b$.
\begin{enumerate}
    \item[(i)]
 RDE $(\ref{RDE})$ driven by \eqref{RDEY} has a unique solution $\Phi(Y) = (Y,\bar{S})$, where
\begin{equation*}
\Phi : \Omega_{\alpha \text{-Hld}} ([0,T],\mathbb{R}^d) \times \mathbb{R} \rightarrow \Omega_{\alpha \text{-Hld}} ([0,T],\mathbb{R}^{d+1})
\end{equation*}
is the solution map of \eqref{RDE} that is locally Lipschitz continuous with respect to $d_{\alpha}$.
\item[(ii)]
The  first level of the last component $\bar{S}$ of the solution to RDE ($\ref{RDE}$) for \eqref{RDEY} with
$\mathbb{X}=\mathbb{X}(\omega)$ defined in Proposition~\ref{lift} gives the solution
$S(\omega) = S_0 + \bar{S}$ to the It\^o SDE ($\ref{rvm}$).
\end{enumerate}
\end{thm}
\begin{proof}
 (i) is a standard result from rough path theory; see e.g., Theorem~1 in \cite{Le} or Chapter~8 in \cite{FH}.
(ii) follows from Proposition~\ref{lift};
see Chapter~9 in \cite{FH}.
\end{proof}

\begin{thm}\label{thm:35}
Let $\sigma \in C^{3}_b$ and
 $\bar{S}^{\epsilon} :=  \Phi(Y^{\epsilon})$, where $\Phi$ is the solution map of Theorem~\ref{thm:33}. Then the sequence of the processes $\{\bar{S}^{\epsilon} \}_{\epsilon \geqq 0}$ satisfies the LDP on $\Omega_{\alpha\text{-Hld}}$ with speed $\epsilon^{-1}$ with good rate function 
\begin{align*}
\tilde{I} (\bar{s})&  
:= \inf \left \{ \tilde{I}^{\#\#\#} (Y) \middle|\  Y \in \Omega_{\alpha \text{-Hld}}, \  \bar{s} =  \Phi (Y) \right \}\\
& = \inf \left \{ \tilde{I}^{\#}(\tilde{v}) \middle| \tilde{v} \in \mathcal{H}, \ (u, v) = \mathbb{K} (\tilde{v}), \
\bar{s} = \int \bar{\sigma} (\bar{s}) f(\hat{\mathbb{L}}(u,v)) \D \mathbb{L}(u,v) \right \}.
\end{align*}
\end{thm}
\begin{proof}
Because the solution map $\Phi$ is continuous,  Theorem~\ref{thm:33} and the contraction theorem imply the claim.
\end{proof}
\subsection{Short-time asymptotics}
We consider the case $\kappa = \kappa_H$ (see (\ref{RLk})). By the scaling property of the Riemann--Liouville fractional Brownian motion $\hat{X}$ and the standard Brownian motion $X$, we have
\begin{equation*}
    \hat{X}_{\epsilon t} \sim \epsilon^H \hat{X}_t, \ \ 
    X_{\epsilon t} \sim  \epsilon^{1/2}X_t.
\end{equation*}
This implies
\begin{equation*}
\tilde{Y}^{\epsilon}_t := \epsilon^{H-1/2}    \int_0^{\epsilon t}  f(\hat{X}_u) \D X_u
\sim \int_0^t f(\hat{X}^{\epsilon}_u) \D X^{\epsilon}_u,
\end{equation*}
where $(\hat{X}^\epsilon,X^\epsilon) = \epsilon^H (\hat{X},X)$, of which the rough path lift is $\mathbb{X}^\epsilon$ of Theorem~\ref{ldp}.
Letting 
\begin{equation*}
    \tilde{S}^\epsilon_t = \frac{S_{\epsilon t} - S_0}{\epsilon^{1/2-H}}, \ \ \tilde{\sigma}^\epsilon(s) = \sigma(S_0 + \epsilon^{1/2-H}s), 
\end{equation*}
we have 
\begin{equation*}
    \tilde{S}^\epsilon_t = \int_0^t \tilde{\sigma}^\epsilon(\tilde{S}^\epsilon_u)\D \tilde{Y}^\epsilon_u,
\end{equation*}
and we can derive an LDP for $\tilde{S}^\epsilon$ by an extended contraction principle~\cite{Pu}.
\begin{thm}\label{thm:36}
Let $\sigma \in C^3_b$. Then 
$\{ \tilde{S}^{\epsilon} \}_{0 < \epsilon \leqq 1}$ satisfies the LDP 
on $\Omega_{\alpha\text{-Hld}}$ as $\epsilon \to 0$ with speed $\epsilon^{-2H}$ with good rate function
\begin{align*}
 \tilde{J} (\tilde{s}) 
 := \inf \left \{ \tilde{I}^{\#}(\tilde{v}) \middle|  \tilde{v} \in \mathcal{H}, \ (u, v) = \mathbb{K} (\tilde{v}), \ \tilde{s} = \sigma (S_0) \int  f(\hat{\mathbb{L}}(u,v)) \D \mathbb{L}(u,v) \right \}.
\end{align*}
\end{thm}
\begin{proof}
Denote by $\Phi_{\epsilon}$ the solution map of the RDE~\eqref{RDE} with $\bar{\sigma} = \tilde{\sigma}^\epsilon$.
We are going to show that $\Phi_\epsilon$ is locally equicontinuous.
Because for all $i \in \mathbb{Z}_+$,
$$||\nabla^i \tilde{\sigma}^{\epsilon}||_{\infty} \leqq (1+ \epsilon)^i || \nabla^i \sigma ||_{\infty} \leqq 2^i || \nabla^i \sigma ||_{\infty},$$ the local Lipschitz constants of $\Phi_{\epsilon}$ can be taken uniformly in $\epsilon$ by Theorem~4 in \cite{Le}. Therefore $\Phi_{\epsilon}$ is 
equicontinuous on bounded sets, and we conclude $\Phi_{\epsilon}(Y_{\epsilon}) \to \Phi_0 (Y)$
for any converging sequence $Y_\epsilon \to Y$ for any $Y$ with $\tilde{I}^{\#\#\#}(Y) < \infty$.
Then by Theorem~\ref{thm:34} and an extended contraction principle \cite{Pu}[\text{Theorem~2.1}],  we have the desired results.
\end{proof}

\begin{remark}
By  the usual argument, adding a drift term to the above RDE is straightforward. The result then generalizes  the existing LDP  for  the rough Bergomi model: 
\[
\D \log S_t = - \frac{1}{2} f^2(\hat{X}_t) \D t + f(\hat{X}_t) \D X_t
\]
in \cite{FoZh,BaFrGaMaSt, Gu, jacquier, GeJaPaStWa}.
To deal with the mixed rough Bergomi model
 \cite{BoDeGo} or the two-factor fractional volatility model \cite{FuKi},
 we need an extension with higher dimensional $\kappa$ and $W$ that is also straightforward.
\end{remark}

An LDP for the marginal distribution $\tilde{S}^\epsilon_1$ follows from the contraction principle, and the corresponding one-dimensional rate function extends the one obtained by \cite{FoZh} as follows.

\begin{theorem} \label{FZ}
Assume $\sigma \in C^3_b$ and
$|\rho|  < 1$. Then $t^{H-1/2} \bar{S}_t$ satisfies the LDP as $t\to 0$ with speed $t^{-2H}$ with good rate function
\[
\tilde{J}^{\#} (z) 
:= \inf_{g \in L^2([0,1])} 
\left [ \frac{1}{2} \int_0^1 |g_r|^2 \D r + \frac{ \left \{z - \rho \sigma(S_0) \int_0^1 f \left (K_Hg(r),0 \right) g_r \D r \right\}^2 }{2 (1 - \rho^2) \sigma(S_0)^2 \int_0^1 f\left(K_Hg(r) ,0 \right)^2 \D r } \right],
\]
where $K_Hg(t) = \int_0^t \kappa_H (t-r)g_r\D r$.
\end{theorem}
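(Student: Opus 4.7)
The plan is to apply the contraction principle to Theorem~\ref{thm:36} via endpoint evaluation, and then to reduce the resulting variational problem to the one-dimensional formula in the statement. The definition $\tilde{S}^\epsilon_t = (S_{\epsilon t} - S_0)/\epsilon^{1/2-H}$ gives the identity $\tilde{S}^t_1 = t^{H-1/2} \bar{S}_t$, so the variable of interest is the first-level value at time $1$ of the rough path $\tilde{S}^t$. The endpoint map $\pi : \Omega_{\alpha\text{-Hld}} \to \mathbb{R}$ defined by $\pi(Y) := Y^{(1)}_{01}$ is $1$-Lipschitz in $d_\alpha$ (since $|Y^{(1)}_{01}| \leqq \|Y^{(1)}\|_{\alpha\text{-Hld}}$), hence continuous. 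Combining $\pi$ with the LDP of Theorem~\ref{thm:36} via the contraction principle yields an LDP for $t^{H-1/2} \bar{S}_t$ with speed $t^{-2H}$ and rate function $\tilde{J}^{\#}(z) = \inf\{\tilde{J}(\tilde{s}) : \tilde{s}^{(1)}_{01} = z\}$.

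Next I would unfold $\tilde{J}$ using the frozen diffusion $\sigma(S_0)$ from Theorem~\ref{thm:36}. Parametrising $\tilde{v} = (\tilde{v}^{(1)}, \tilde{v}^{(2)}) \in \mathcal{H}$ by $g := \dot{\tilde{v}}^{(1)}$ and $h := \dot{\tilde{v}}^{(2)}$ in $L^2([0,1])$, one has $\tilde{I}^{\#}(\tilde{v}) = \tfrac12(\|g\|_{L^2}^2 + \|h\|_{L^2}^2)$, and $\mathbb{K}(\tilde{v})$ produces $u = (K_H g, 0)$ together with $v$ satisfying $\dot{v} = \rho g + \sqrt{1 - \rho^2}\, h$. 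Because $u$ and $v$ are absolutely continuous, the first-level-at-$1$ of $\int f(\hat{\mathbb{L}}(u,v))\,\D\mathbb{L}(u,v)$ collapses to the classical integral $\int_0^1 f(K_H g(r), 0)\, \dot{v}_r\,\D r$, and the RDE with constant coefficient $\sigma(S_0)$ is solved by scalar multiplication. Hence the constraint $\tilde{s}^{(1)}_{01} = z$ becomes
\[
z = \sigma(S_0) \int_0^1 f\bigl(K_H g(r), 0\bigr) \bigl(\rho\, g_r + \sqrt{1-\rho^2}\, h_r\bigr) \D r.
\]

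It remains to optimise over $h$ for fixed $g$. This is a quadratic minimisation with a single linear constraint of the form $\int_0^1 \varphi_r\, h_r\,\D r = c$, with $\varphi_r := \sqrt{1-\rho^2}\, \sigma(S_0) f(K_H g(r), 0)$ and $c := z - \rho\, \sigma(S_0) \int_0^1 f(K_H g(r), 0)\, g_r\,\D r$. By Cauchy--Schwarz, saturated at $h_r \propto \varphi_r$, the minimum equals $c^2/(2\|\varphi\|_{L^2}^2)$, and the hypothesis $|\rho|<1$ ensures the denominator is strictly positive whenever $f(K_H g(\cdot), 0)$ is not almost everywhere zero. Adding $\tfrac12 \|g\|_{L^2}^2$ and taking the infimum over $g \in L^2([0,1])$ reproduces exactly the expression in the statement.

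The main obstacle I anticipate is the reduction invoked in the second paragraph: verifying that, for the smooth paths $u, v$ coming from the Cameron--Martin shift, the $(\alpha,\beta)$ rough path integral reduces to the classical Young integral. The compensated Riemann sums defining $Y^{(1)}$ in Section~2.2 include Taylor corrections $\partial^i f(\hat{x}_{t_{p-1}})\, X^{(i)}_{t_{p-1} t_p}$ for multi-indices $i \in I \setminus \{0\}$, so one has to check that these higher-order terms vanish in the mesh-size limit. This follows from the H\"older bounds in the definition of $\Omega_{(\alpha,\beta)\text{-Hld}}$, noting that for absolutely continuous $u, v$ the exponents can be taken arbitrarily close to $1$, so all $X^{(i)}$ with $|i| \geqq 1$ contribute $o(|t-s|)$; once this is confirmed, the remaining steps are elementary.
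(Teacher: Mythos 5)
Your proposal is correct and follows essentially the same route as the paper's Appendix~\ref{C}: contraction via endpoint evaluation of Theorem~\ref{thm:36}, parametrisation of the Cameron--Martin pair, reduction of the rough integral to a classical one for absolutely continuous paths, and minimisation over the second component under the single linear constraint. The only cosmetic difference is that you close the constrained quadratic minimisation by Cauchy--Schwarz where the paper uses a first-variation (Lagrange-type) argument; these give the identical minimiser $h \propto f(K_Hg(\cdot),0)$ and the same value $c^2/(2\|\varphi\|_{L^2}^2)$.
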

\begin{proof}
See Appendix \ref{C}.
\end{proof}
A short-time asymptotic formula of the implied volatility (regarding $S$ as a price or a log-price process) then follows from Theorem~\ref{FZ} as in \cite{FoZh}.
From the rate function of Theorem~\ref{FZ}, we observe that the effect of the function $\sigma$ to the short-time asymptotics is only through the constant $\sigma(S_0)$. In particular, the local volatility function $\sigma$ does not add any flexibility to the asymptotic shape of the implied volatility surface.

\section{Proofs of main theorems}
\subsection { Proof of Theorem \ref{reconst}}\label{pr1}
\begin{proof}
By a localizing argument, 
we can assume without loss of generality that the derivatives of $f$ are bounded.
For brevity, let $K  := ||f||_{C^{n+2}_b}$ and $M := |||\mathbb{X}|||_{(\alpha,\beta)}$. Let
\[
J^{(1)}_{st} = J^{(1)}(\mathbb{X})_{st} := \sum_{i \in I}  \partial^i f(\hat{x}_s) X^{(i)}_{st},
\quad J^{(2)}_{st} = J^{(2)}(\mathbb{X})_{st}  := \sum_{ (j,k) \in J} \partial^j f(\hat{x}_s) \partial^k f(\hat{x}_s)  \mathbf{X}^{(jk)}_{st},
\]
where $\hat{x}_s := \hat{X}_{0s}$.
Below, we follow the standard argument of rough path theory with Chen's identity replaced by our modified version \eqref{chen1}, \eqref{chen2}.
\begin{description}

\item[(Claim~1)]
 The first level of the $(\alpha, \beta)$ rough path integral $Y^{(1)}_{st}$ is well-defined and has the following inequality: 
\begin{equation}\label{step1y}
|Y^{(1)}_{st}|  \leqq K C_1 |t-s|^{\alpha},
\end{equation}
where
\[
C_1 := (n+1)^{2e} (1 + M)^{n+2} (1+T)^{(n+1)\beta} \left \{1 + 2^{(n+1)\beta + \alpha} \zeta((n+1)\beta + \alpha) \right\},
\]
and $\zeta(r) := \sum_{p=1}^{\infty} \frac{1}{p^r}$.
\begin{proof}
By Taylor expansion, we have
\begin{align}\label{tay1}
\sum_{i \in I} \partial^i f(\hat{x}_u) X^{(i)}_{ut} 
& =  \sum_{i \in I} \left \{   \sum_{|p| \leqq n - |i|} \frac{1}{ p!} \partial^{i+p} f(\hat{x}_{s}) \left (\hat{X}_{su}  \right)^p X^{(i)}_{ut} 
+ R_i X^{(i)}_{ut}  \right \} \notag \\
& = \sum_{i \in I}  \partial^i f(\hat{x}_s) \left \{ \sum_{p \leqq i}
\frac{1}{(i-p)!}  \left (\hat{X}_{su} \right)^{i-p} X^{(p)}_{ut} \right \} + \sum_{i \in I} R_i X^{(i)}_{ut},
\end{align}
where
\begin{align}\label{r}
   & R_i = R(\mathbb{X})_i \notag \\
&= \sum_{|p| = n+1-|i|}  \left( \int_0^1 \frac{(1-\theta)^{n+1-|i|} (n+1-|i|) }{p!} \partial^{p} f(\hat{x}_{s} + \theta \hat{X}_{su}) \D \theta \right ) ( \hat{X}_{su})^{p}
\end{align}
By the modified Chen's relation ($\ref{chen1}$) and $(\ref{tay1})$, for any $s \leqq u \leqq t$, 
\begin{align}\label{step1main}
 & J^{(1)}_{su} + J^{(1)}_{ut} - J^{(1)}_{st} \notag \\
 &= \sum_{i \in I}  \partial^i f(\hat{x}_s) \left (X^{(i)}_{su} - X^{(i)}_{st}\right) + \sum_{i \in I} \partial^i f(\hat{x}_u)  X^{(i)}_{ut} \notag \\
& = - \sum_{i \in I}  \partial^i f(\hat{x}_s) \left \{ \sum_{p\leqq i} \frac{1}{(i-p)!}  \left (\hat{X}_{su} \right)^{i-p} X^{(p)}_{ut} \right \} 
+ \sum_{i \in I} \partial^i f(\hat{x}_u) X^{(i)}_{ut} \notag\\
& =  \sum_{i \in I} R_i X^{(i)}_{ut} .
\end{align}
Because for all $i \in I$,
\begin{equation*}
\left | R_i X^{(i)}_{ut}   \right | 
 \leqq K \sum_{|p| = n+1-|i|} \left |  (\hat{X}_{su} )^{p} X^{(i)}_{ut} \right| 
 \leqq K (n+1)^e (1+M)^{n+2} |t-s|^{(n+1)\beta + \alpha}, 
\end{equation*}
we have
\begin{equation*}
\left |J^{(1)}_{su} + J^{(1)}_{ut} - J^{(1)}_{st} \right | \leqq K (n+1)^{2e}  (1 + M)^{n+2} |t-s|^{(n+1)\beta + \alpha}.
\end{equation*}
For any partition $\mathcal{P} = \{ s = t_0 < t_1 <...<t_N = t \} $, let $J^{(1)}_{st}(\mathcal{P}):= \sum_{p=1}^{N} J^{(1)}_{t_{p-1} t_{p}}$.  
By Lemma~$\ref{itten}$, there exists $p \in \{1,...,N \}$ such that 
\begin{align}\label{p}
     |t_{p+1} -  t_{p+1}| \leqq  \frac{2}{N-1} |t-s|. 
\end{align}
Then we have
\begin{align*}
 & \left |J^{(1)}_{st} \left (\mathcal{P} \right) - J^{(1)}_{st} \left (\mathcal{P} \backslash \{t_{p}\} \right) \right| \\
& = \left |J^{(1)}_{t_{p-1} t_{p}} + J^{(1)}_{t_{p} t_{p+1}} - J^{(1)}_{t_{p-1} t_{p+1}}  \right |\\
& \leqq K (n+1)^{2e}  (1 + M)^{n+2} |t_{p+1}-t_{p-1}|^{(n+1)\beta + \alpha}\\
& \leqq K (n+1)^{2e}  (1 + M)^{n+2} \left (\frac{2}{N-1} \right)^{(n+1)\beta+\alpha} |t-s|^{(n+1)\beta + \alpha},
\end{align*}
and this implies (note that $(n+1)\beta + \alpha > 1$)
\begin{align}\label{step1j}
 & \left |J^{(1)}_{st}(\mathcal{P}) - J^{(1)}_{st} \right| \notag\\
 &\leqq  K (n+1)^{2e}  (1 + M)^{n+2} 2^{(n+1)\beta +\alpha}  \zeta \left((n+1)\beta +\alpha \right) |t-s|^{(n+1)\beta + \alpha}.
\end{align}

\textbf{(Claim~1a)} $\{J^{(1)}_{st}(\mathcal{P})\}_{\mathcal{P}}$ is a Cauchy sequence with $|\mathcal{P}| \searrow 0$. 

Let $\mathcal{Q}$ be any subdivision of $\mathcal{P}$: $\mathcal{Q} = \{s = \tau_0 < \tau_1 <...<\tau_L = t , \} , L >N$. 
Consider the subsequence $\{ \tau_{l_0}< \tau_{l_1}< ...<\tau_{l_N} \}$ with $\tau_{l_p} = t_p$, and let $\mathcal{Q}_p := \mathcal{Q} \cap [t_{p-1},t_p]$. Then $\mathcal{Q}_p$ is a partition of $[t_{p-1},t_p]$. 
By using (\ref{step1j}), we have that 
\begin{align*}
    & \left | J^{(1)}_{st} (\mathcal{Q}) - J^{(1)}_{st} ( \mathcal{P})  \right| \\
    & \leqq \sum_{p=1}^N \left | J^{(1)}_{st} (\mathcal{Q}_p) - J^{(1)}_{t_{p-1} t_p }   \right|\\
    & \leqq K (n+1)^{2e}  (1 + M)^{n+2} 2^{(n+1)\beta +\alpha}  \zeta \left((n+1)\beta +\alpha \right) 
    \sum_{p=1}^N |t_p - t_{p-1}|^{(n+1)\beta + \alpha}\\
    & \leqq K (n+1)^{2e}  (1 + M)^{n+2} 2^{(n+1)\beta +\alpha}  \zeta \left((n+1)\beta +\alpha \right) T \left ( \sup_{t-s \leqq |\mathcal{P}|} |t-s|^{(n+1)\beta + \alpha -1} \right).
\end{align*}
Hence for any partition $\mathcal{P}, \mathcal{P}'$ with $| \mathcal{P} | \vee |\mathcal{P}'| \leqq \delta$, we have that 
\begin{align*}
    & \left | J^{(1)}_{st} (\mathcal{P}) - J^{(1)}_{st} ( \mathcal{P}')  \right| \\
    & \leqq \left | J^{(1)}_{st} (\mathcal{P}) - J^{(1)}_{st} (\mathcal{P} \cup \mathcal{P}')  \right| 
    + \left | J^{(1)}_{st} (\mathcal{P} \cup \mathcal{P}') - J^{(1)}_{st} ( \mathcal{P}')  \right| \notag \\
    & \leqq K (n+1)^{2e}  (1 + M)^{n+2} 2^{(n+1)\beta +\alpha+1}  \zeta \left((n+1)\beta +\alpha \right) T 
    \left ( \sup_{t-s \leqq \delta } |t-s|^{(n+1)\beta + \alpha -1} \right),
\end{align*}
and because $(n+1)\beta + \alpha > 1$, we conclude that $\{J^{(1)}_{st}(\mathcal{P})\}_{\mathcal{P}}$ is a Cauchy sequence.

Therefore, $Y^{(1)}_{st}$ is well-defined. Furthermore, by $(\ref{step1j})$, we have 
\begin{equation*}
|Y^{(1)}_{st}|  \leqq  |J^{(1)}_{st}| + |Y^{(1)}_{st} - J^{(1)}_{st}| \leqq K C_1 |t-s|^{\alpha}.
\end{equation*}
Thus we have proved the statement of Claim~1. 
\end{proof}

\item[(Claim~2)]
Let $m := \max_{(j,k) \in J} |j+k|$. Then the second level of the $(\alpha, \beta)$ rough path integral $Y^{(2)}_{st}$ is well-defined and has the following inequality:
\begin{equation*}
|Y^{(2)}_{st}| \leqq  K^2  C_2  |t-s|^{2\alpha},
\end{equation*}
where 
\[
C_2 := (1+m)^{2e} M (1+T)^{m \beta} + \left ( \tilde{C}_2 + 2  C^2_1 T^{(n  - m)\beta }  \right )  
2^{(m+1)\beta+2 \alpha} \zeta \left ((m+1)\beta + 2 \alpha \right ),
\]
and
\[
\tilde{C}_2 := 2 (1+n+m)^{4e} (1+M)^{m+3}  (1+T)^{(2n-m-1)\beta}  .
\]
In particular, we have $\int f(\hat{\mathbb{X}}) d\mathbb{X} \in \Omega_{\alpha \text{-Hld}}$. 

\begin{proof}
By the modified Chen's relation ($\ref{chen2}$), for all $s \leqq u \leqq t$, 
\begin{align*}
& J^{(2)}_{su} + J^{(2)}_{ut} + J^{(1)}_{su} \otimes J^{(1)}_{ut} - J^{(2)}_{st}  \\
& = J^{(1)}_{su} \otimes J^{(1)}_{ut} + \sum_{ (j,k) \in J} \left [ \partial^j f(\hat{x}_s) \partial^k f(\hat{x}_s) \left (\mathbf{X}^{(jk)}_{su} - \mathbf{X}^{(jk)}_{st} \right) +  \partial^j f(\hat{x}_u) \partial^k f(\hat{x}_u) \mathbf{X}^{(jk)}_{ut} \right]  \\
& = : S_1 + S_2,
\end{align*}
where
\begin{align*}
& S_1 = S_1(\mathbb{X}) \\
&:=  J^{(1)}_{su} \otimes J^{(1)}_{ut} 
 - \!\! \sum_{(j,k) \in J} \partial^j f(\hat{x}_s) \partial^k f(\hat{x}_s) 
 \left ( \sum_{q\leqq k} \frac {1}{(k-q)!} \left (\hat{X}_{su} \right )^{k-q} X^{(j)}_{su} \otimes X^{(q)}_{ut} \right)
\end{align*}
and 
\begin{align*}
& S_2 = S_2(\mathbb{X}) \\
& := \! \!\! \sum_{(j,k) \in J}  \partial^j f(\hat{x}_u)  \partial^k f(\hat{x}_u) \mathbf{X}^{(jk)}_{ut}\\
& \quad- \!\! \sum_{(j,k) \in J} \partial^j f(\hat{x}_s) \partial^k f(\hat{x}_s)\left(  \sum_{p \leqq j} \sum_{q \leqq k} \frac{1}{(j-p)! (k-q)!}  \left (\hat{X}_{su} \right)^{j+k-p-q} \mathbf{X}^{(pq)}_{ut} \right ).
\end{align*}
Note that 
\begin{align*}
 J^{(1)}_{su} \otimes J^{(1)}_{ut} 
 &= \left (\sum_{j \in I}  \partial^j f(\hat{x}_s) X^{(j)}_{su} \right ) 
\otimes \left (\sum_{k \in I} \partial^k f(\hat{x}_u) X^{(k)}_{ut} \right)\\
&= \sum_{ j \in I} \sum_{k \in I} \partial^j f(\hat{x}_s) \partial^k f(\hat{x}_u) X^{(j)}_{su} \otimes X^{(k)}_{ut}\\
& = \sum_{(j,k) \in J}  \partial^j f(\hat{x}_s) \partial^k f(\hat{x}_u) X^{(j)}_{su} \otimes X^{(k)}_{ut} 
+ \!\!\! \!\! \sum_{(j,k) \in I \times I \backslash J}  \partial^j f(\hat{x}_s) \partial^k f(\hat{x}_u) X^{(j)}_{su} \otimes X^{(k)}_{ut}.
\end{align*}
By Taylor expansion, we have
\begin{align*}
&  \sum_{(j,k) \in J}  \partial^j f(\hat{x}_s) \partial^k f(\hat{x}_u) X^{(j)}_{su} \otimes X^{(k)}_{ut} \\
& =\!\!\! \sum_{(j,k) \in J} \left \{  \sum_{|q| \leqq m - |j+k|} \frac{1}{q!}  \partial^j f(\hat{x}_s) \partial^{k+q} f(\hat{x}_s) (\hat{X}_{su})^{q} X^{(j)}_{su} \otimes X^{(k)}_{ut}  
+ \partial^j f(\hat{x}_s) R^{(1)}_{jk} X^{(j)}_{su} \otimes X^{(k)}_{ut} \right \} \\
& = \!\!\! \sum_{(j,k) \in J} \left \{  \partial^j f(\hat{x}_s) \partial^k f(\hat{x}_s) \left ( \sum_{q \leqq k} \frac {1}{(k-q)!} \left (\hat{X}_{su} \right )^{k-q} X^{(j)}_{su} \otimes X^{(q)}_{ut} \right) 
+ \partial^j f(\hat{x}_s) R^{(1)}_{jk} X^{(j)}_{su} \otimes X^{(k)}_{ut} \right \},
\end{align*}
where
\begin{align*}
  &R^{(1)}_{jk} = R^{(1)}_{jk}(\mathbb{X}) \\
  & := \sum_{|q| = m+1-|j+k|}  \left( \int_0^1 \frac{(1-\theta)^{m+1-|j+k|} (m+1-|j+k|)}{q !} \partial^{k+q} f(\hat{x}_s + \theta \hat{X}_{su} ) \D \theta \right)   (\hat{X}_{su})^{q},
\end{align*}
and so we obtain that
\begin{align}\label{s_1}
& S_1 = J^{(1)}_{su} \otimes J^{(1)}_{ut} 
 - \!\! \sum_{(j,k) \in J} \partial^j f(\hat{x}_s) \partial^k f(\hat{x}_s) \left ( \sum_{q \leqq k} \frac {1}{(k-q)!} \left (\hat{X}_{su} \right )^{k-q} X^{(j)}_{su} \otimes X^{(q)}_{ut} \right) \notag\\
 & = \sum_{(j,k) \in J} \partial^j f(\hat{x}_s)  R^{(1)}_{jk} X^{(j)}_{su} \otimes X^{(k)}_{ut} 
 + \sum_{(j,k) \in I \times I \backslash J}  \partial^j f(\hat{x}_s) \partial^k f(\hat{x}_u) X^{(j)}_{su} \otimes X^{(k)}_{ut}
\end{align}
and 
\begin{align}\label{s_12}
& \left | S _1 \right |\notag\\
& \leqq  \sum_{(j,k) \in J} \left | \partial^j f(\hat{x}_s) R^{(1)}_{jk} X^{(j)}_{su} \otimes X^{(k)}_{ut} \right | 
+ \sum_{(j,k) \in I \times I \backslash J} \left | \partial^j f(\hat{x}_s) \partial^k f(\hat{x}_u) X^{(j)}_{su} \otimes X^{(k)}_{ut} \right | \notag \\
& \leqq \sum_{(j,k) \in J} \sum_{|q|= m+1-|j+k|} K^2 |(\hat{X}_{su} )^q X^{(j)}_{su} \otimes X^{(k)}_{ut} | 
+\sum_{(j,k) \in I \times I \backslash J} K^2 |X^{(j)}_{su} \otimes X^{(k)}_{ut}| \notag \\  
& \leqq K^2 (1+m)^{3e} (1+M)^{m+3} |t-s|^{(m+1)\beta+2\alpha} \notag \\
& \quad + K^2 (1+n)^{2e} (1+M)^{2} (1+T)^{(2n-m-1)\beta} |t-s|^{(m+1)\beta+2\alpha} \notag \\
& \leqq 2 K^2 (1+n+m)^{3e} (1+M)^{m+3} (1+ T)^{(2n-m-1) \beta} | t-s |^{(m+1)\beta + 2 \alpha}.
\end{align}
Here we use $m \leqq n$ (because $(n+1)\beta + \alpha >1$, we have $(n+1)\beta + 2 \alpha >1$, and the definition of $m$ implies $m \leqq n$). 

On the other hand, one can show that 
\begin{equation}\label{s_2}
 S_2 
 = \sum_{(j,k)\in J}
 \left\{  \sum_{|p| \leqq m-|j+k|}
 \frac{1}{p!} \partial^{j+p} f(\hat{x}_s) R^{(3)}_{jkp} (\hat{X}_{su})^{p} \mathbf{X}^{(jk)}_{ut} 
 + \partial^{k} f(\hat{x}_u) R^{(2)}_{jk}  \mathbf{X}^{(jk)}_{ut} \right\}
\end{equation}
by using the Taylor expansion   
\begin{align*}
& \sum_{(j,k) \in J} \partial^j f(\hat{x}_u) \partial^k f(\hat{x}_u) \mathbf{X}^{(jk)}_{ut} \\
& = \sum_{(j,k)\in J} \left \{ \sum_{|p|\leqq m - |j+k|} \frac{1}{p!} \partial^{j+p} f(\hat{x}_s) (\hat{X}_{su})^p  + R^{(2)}_{jk} \right \} \partial^{k} f(\hat{x}_u) \mathbf{X}^{(jk)}_{ut}\\
& = \sum_{(j,k)\in J} \sum_{|p|\leqq m - |j+k|} \sum_{|q| \leqq m - |j+k+p|} \frac{1}{p! q!}\partial^{j+p} f(\hat{x}_s) \partial^{k+q} f(\hat{x}_s) (\hat{X}_{su})^{p+q} \mathbf{X}^{(jk)}_{ut}\\
& \quad + \sum_{(j,k)\in J} \sum_{|p|\leqq m - |j+k|} \frac{1}{p!}\partial^{j+p} f(\hat{x}_s) R^{(3)}_{jkp} (\hat{X}_{su})^{p} \mathbf{X}^{(jk)}_{ut}\\
& \quad + \sum_{(j,k)\in J} R^{(2)}_{jk} \partial^{k} f(\hat{x}_u)   \mathbf{X}^{(jk)}_{ut},
\end{align*}
where
\begin{align*}
  &R^{(2)}_{jk} = R^{(2)}_{jk} (\mathbb{X})\\
  &:= \sum_{|p| = m+1 -|j+k|} \left( \int_0^1 \frac{(1-\theta)^{m+1 -|j+k|} (m+1 -|j+k|)}{p !} 
  \partial^{j+p} f(\hat{x}_s + \theta \hat{X}_{su} ) \D \theta \right) (\hat{X}_{su})^{p},  
\end{align*}

\begin{align*}
  &R^{(3)}_{jkp} = R^{(3)}_{jkp} (\mathbb{X}) \\
  &:= \! \! \! \! \! \sum_{|q| = m+1 -|j+k+p|} \left( \int_0^1 \frac{(1-\theta)^{m+1 -|j+k+p|} (m+1 -|j+k+p|)}{ q !} 
  \partial^{k+q} f(\hat{x}_s + \theta \hat{X}_{su} ) \D \theta \right) (\hat{X}_{su})^{q}.
\end{align*}

Because for all $(j,k) \in J$ and $0\leqq |p|\leqq m-|j+k|$,
\begin{align*}
    \left| \partial^{j+p} f(\hat{x}_s) R^{(3)}_{jkp} (\hat{X}_{su})^{p} \mathbf{X}^{(jk)}_{ut} \right | 
&\leqq K^2 \sum_{|q| =m+1 - |j+k+p|} |(\hat{X}_{su})^{p+q} \mathbf{X}^{(jk)}_{ut}|\\
&\leqq K^2 (1+m)^e (1+M)^{m+2}  |t-s|^{(m+1)\beta +2 \alpha},
\end{align*}
and
\begin{align*}
 \left| R^{(2)}_{jk} \partial^{k} f(\hat{x}_u)  \mathbf{X}^{(jk)}_{ut} \right| 
&\leqq  K^2 \sum_{|p| =m+1 - |j+k|} |(\hat{X}_{su})^{p} \mathbf{X}^{(jk)}_{ut}|\\
&\leqq  K^2 (1+m)^e (1+M)^{m+2}  |t-s|^{(m+1)\beta +2 \alpha},   
\end{align*}
we have
\begin{align}\label{s_22}
& \left | S_2 \right | \notag \\
& \leqq \sum_{(j,k)\in J} \sum_{|p| \leqq m-|j+k|} | \partial^{j+p} f(\hat{x}_s) R^{(3)}_{jkp} (\hat{X}_{su})^{p} \mathbf{X}^{(jk)}_{ut} |
 + \sum_{(j,k)\in J} | \partial^{k} f(\hat{x}_u) R^{(2)}_{jk}  \mathbf{X}^{(jk)}_{ut} | \notag \\
 &\leqq K^2 (1+m)^{4e}(1+M)^{m+2} |t-s|^{(m+1)\beta +2 \alpha} + K^2 (1+m)^{3e} (1+M)^{m+1} |t-s|^{(m+1)\beta +2 \alpha} \notag\\
& \leqq2  K^2  (1+m)^{4e} (1+M)^{m+2}  |t-s|^{(m+1)\beta +2 \alpha}.
\end{align}
By $ (\ref{s_12})$ and $(\ref{s_22})$, we have
\begin{align*}
\left |J^{(2)}_{su} + J^{(2)}_{ut} + J^{(1)}_{su} \otimes J^{(1)}_{ut} - J^{(2)}_{st}  \right|  \leqq |S_1|+|S_2| \leqq K^2 \tilde{C}_2  |t-s|^{(m+1)\beta + 2\alpha},
\end{align*}
where $\tilde{C}_2 = 2 (1+n+m)^{4e} (1+M)^{m+3}  (1+T)^{(2n-m-1)\beta}  $.
Moreover, by  $(\ref{step1y})$ and $(\ref{step1j})$, we have 
\begin{align*}
& \left |Y^{(1)}_{su} \otimes Y^{(1)}_{ut} - J^{(1)}_{su} \otimes J^{(1)}_{ut} \right|  \\
&\leqq \left |Y^{(1)}_{su} \right|  \left|Y^{(1)}_{ut} - J^{(1)}_{ut} \right| 
+\left |Y^{(1)}_{su} - J^{(1)}_{su} \right| \left |J^{(1)}_{ut} \right| \leqq 2 K^2 C_1^2 |t-s|^{(n+1)\beta+2\alpha}.
\end{align*}
Let $J^{(2)}_{st}(\mathcal{P}):= \sum_{p=1}^{n}  Y^{(1)}_{t_{0} t_{p-1}} \otimes Y^{(1)}_{t_{p-1} t_p}  + J^{(2)}_{t_{p-1}t_p} $. 
By Lemma $\ref{itten}$, there exists $p \in \{1,...,N \}$ such that (\ref{p}) holds. 
Note that $m \leqq n$. Then, the above inequalities imply that 
\begin{align*}
& \left |J^{(2)}_{st}(\mathcal{P}) - J^{(2)}_{st}(\mathcal{P} \backslash \{t_p\}) \right| \\ 
&  \leqq \left  |J^{(2)}_{t_{p-1}t_p} + J^{(2)}_{t_pt_{p+1}} + Y^{(1)}_{t_{p-1}t_p} \otimes Y^{(1)}_{t_pt_{p+1}} - J^{(2)}_{t_{p-1}t_{p+1}} \right |\\
& \leqq \left |J^{(2)}_{t_{p-1}t_p} + J^{(2)}_{t_pt_{p+1}} + J^{(1)}_{t_{p-1}t_p} \otimes J^{(1)}_{t_pt_{p+1}} - J^{(2)}_{t_{p-1}t_{p+1}} \right|
+ \left |Y^{(1)}_{t_{p-1}t_p} \otimes Y^{(1)}_{t_pt_{p+1}} - J^{(1)}_{t_{p-1}t_p} \otimes J^{(1)}_{t_pt_{p+1}} \right|\\
& \leqq K^2 \tilde{C}_2 |t_{p+1} - t_{p-1}|^{(m+1)\beta + 2\alpha} + 2K^2 C_1^2  |t_{p+1} - t_{p-1}|^{(n+1)\beta+2\alpha} \\
& \leqq K^2 \left ( \tilde{C}_2 + 2 C^2_1 T^{(n  - m)\beta }  \right )  \left ( \frac{2}{N-1} \right)^{(m+1)\beta + 2\alpha} |t-s|^{(m+1)\beta +2 \alpha}.
\end{align*}
This implies that (note that $(m+1)\beta + 2 \alpha > 1$)
\begin{align}\label{it}
 |J^{(2)}_{st}(\mathcal{P}) - J^{(2)}_{st}| 
 \leqq K^2  C_2 |t-s|^{(m+1)\beta +2\alpha}.
\end{align}
This shows that  $\{J^{(2)}_{st}(\mathcal{P})\}_{\mathcal{P}}$ is a Cauchy sequence when $|\mathcal{P}| \searrow 0$ (one can adapt the argument of Claim~1a in the proof of Claim~1 by using (\ref{it}) instead of (\ref{step1j})).
Hence, $Y^{(2)}_{st}$ is well-defined. We also obtain that 
\begin{align*}
 |Y^{(2)}_{st}|  \leqq  |J^{(2)}_{st}| + |Y^{(2)}_{st} - J^{(2)}_{st}|  \leqq K^2  C_2  |t-s|^{2\alpha}.
\end{align*}
Next, we prove that $\int f(\mathbb{\hat{X}}) \D \mathbb{X}$ satisfies Chen's relation. Fix $\epsilon >0$ and $s<u<t$. By taking a partition $\mathcal{P} = \{s=t_0<t_1<...<t_N=t \}$ of $[s,t]$ small enough (which has the point $t_{\tilde{N}}=u$), we have
\begin{align*}
 & \left |Y^{(1)}_{st} -Y^{(1)}_{su} -Y^{(1)}_{ut} \right| \\
 & \leqq \left |Y^{(1)}_{st}- \sum_{p=1}^{N} J^{(1)}_{t_{p-1} t_p}\right| 
 + \left|Y^{(1)}_{su}- \sum_{p=1}^{\tilde{N}} J^{(1)}_{t_{p-1} t_p}\right| 
+ \left|Y^{(1)}_{ut}- \sum_{p=\tilde{N}+1}^{N} J^{(1)}_{t_{p-1} t_p}\right|\\
&\leqq 3 \epsilon
\end{align*}
and so the first level of $\int f(\mathbb{\hat{X}}) \D \mathbb{X}$ satisfies Chen's relation. 
Note that this result implies that 
\[
\sum_{q=1}^{N} Y^{(1)}_{t_0t_{q-1}} \otimes Y^{(1)}_{t_{q-1} t_q} 
= \sum_{0<p <q \leqq N} Y^{(1)}_{t_{p-1} t_p} \otimes Y^{(1)}_{t_{q-1} t_q}.
\]
Note also that  
\begin{align*}
& Y^{(1)}_{su} \otimes Y^{(1)}_{ut} \\
&= \left( \sum_{p=1}^{\tilde{N}} Y^{(1)}_{t_{p-1}t_p} \right) \otimes \left( \sum_{q=\tilde{N}+1}^N Y^{(1)}_{t_{q-1}t_q} \right )\\
& = \sum_{0<p<q \leqq N} Y^{(1)}_{t_{p-1}t_p} \otimes Y^{(1)}_{t_{q-1}t_q} 
- \sum_{0<p<q \leqq \tilde{N}} Y^{(1)}_{t_{p-1}t_p}\otimes Y^{(1)}_{t_{q-1}t_q}
- \sum_{\tilde{N}<p<q \leqq N} Y^{(1)}_{t_{p-1}t_p}\otimes Y^{(1)}_{t_{q-1}t_q}\\
& = \sum_{p=1}^N Y^{(1)}_{t_0 t_{p-1}} \otimes Y^{(1)}_{t_{p-1} t_{p}} 
-\sum_{p=1}^{\tilde{N}}  Y^{(1)}_{t_0 t_{p-1}} \otimes Y^{(1)}_{t_{p-1} t_{p}}
-\sum_{p=\tilde{N}+1}^N  Y^{(1)}_{t_{\tilde{N}} t_{p-1}} \otimes Y^{(1)}_{t_{p-1} t_{p}},
\end{align*}
and so we have
\begin{align*}
&  \left| Y^{(2)}_{st} - Y^{(2)}_{su} - Y^{(2)}_{ut} - Y^{(1)}_{su} \otimes Y^{(1)}_{ut} \right|\\
&  \leqq \left|Y^{(2)}_{st} - \mathcal{S}_{st}  \right| + \left|Y^{(2)}_{su} - \mathcal{S}_{su}  \right| + \left|Y^{(2)}_{ut} - \mathcal{S}_{ut}  \right|  \leqq 3 \epsilon,
\end{align*}
where 
\begin{equation*}
    \begin{split}
       &\mathcal{S}_{st} := \sum_{p=1}^{N} \left ( Y^{(1)}_{t_{0} t_{p-1}} \otimes Y^{(1)}_{t_{p-1} t_{p}}  + \sum_{ (j,k) \in J} \!\! \partial^j f(\hat{x}_{t_{p-1}}) \partial^k f(\hat{x}_{t_{p-1}}) \mathbf{X}^{(jk)}_{t_{p-1}t_{p}} \right ), \\
       &\mathcal{S}_{su} := \sum_{p=1}^{\tilde{N}} \left ( Y^{(1)}_{t_{0} t_{p-1}} \otimes Y^{(1)}_{t_{p-1} t_{p}}  + \sum_{ (j,k) \in J} \!\! \partial^j f(\hat{x}_{t_{p-1}}) \partial^k f(\hat{x}_{t_{p-1}}) \mathbf{X}^{(jk)}_{t_{p-1}t_{p}} \right ), \\
       &\mathcal{S}_{ut} := \sum_{p=\tilde{N}+1}^{N} \left ( Y^{(1)}_{t_{M} t_{p-1}} \otimes Y^{(1)}_{t_{p-1} t_{p}}  + \sum_{ (j,k) \in J} \!\! \partial^j f(\hat{x}_{t_{p-1}}) \partial^k f(\hat{x}_{t_{p-1}}) \mathbf{X}^{(jk)}_{t_{p-1}t_{p}} \right ).
    \end{split}
\end{equation*}
Therefore, the second level of $\int f(\mathbb{\hat{X}}) \D \mathbb{X}$ also satisfies Chen's relation. 
The above argument proves statement (i) of Theorem~$\ref{reconst}$.
\end{proof}

\item[(Claim~3)]
Suppose that there exist $M>0$ and $\epsilon >0$ such that 
\begin{equation*}
|\hat{V}_{st}| \vee |\hat{W}_{st}|  \leqq  M |t-s|^{\beta}, \quad 
|V^{(i)}_{st}| \vee | W^{(i)}_{st}|  \leqq M |t-s|^{|i|\beta + \alpha},
\end{equation*}
\begin{equation*}
|\mathbf{V}^{(jk)}_{st}| \vee |\mathbf{W}^{(jk)}_{st}|  \leqq  M |t-s|^{ |j+k| \beta + 2 \alpha},\quad 
|\hat{V}_{st} - \hat{W}_{st}|  \leqq  \epsilon |t-s|^{\beta}, 
\end{equation*}
and
\begin{equation*}
|V^{(i)} _{st} - W^{(i)}_{st}|  \leqq \epsilon |t-s|^{|i|\beta + \alpha}, \ \  
|\mathbf{V}^{(jk)}_{st} - \mathbf{W}^{(jk)}_{st}|  \leqq  \epsilon |t-s|^{|j+k|\beta + 2\alpha}.
\end{equation*}
Then,  there exists $C_3>0$ such that
\begin{equation}\label{claim3}
\left | \left (\int f(\hat{\mathbb{V}}) \D {\mathbb{V}} \right )^{(1)}_{st} -  \left (\int f(\hat{\mathbb{W}}) \D {\mathbb{W}} \right)^{(1)}_{st} \right| \leqq K \epsilon C_3 |t-s|^{\alpha},
\end{equation}
where 
\begin{align*}
 C_3 :=  (1+n)^{2e+1} (1+T)^{(n+1)\beta}   \{ 1 + (3e+2) (1+M)^{n+2} 2^{(n+1)\beta + \alpha} \zeta((n+1)\beta +\alpha)  \}   .
\end{align*}
\begin{proof}
By the assumption and the mean value theorem, we have
\begin{align}\label{claim31}
& \left |J^{(1)}(\mathbb{V})_{st} - J^{(1)}(\mathbb{W})_{st} \right| \notag\\
& = \left |\sum_{i \in I} \partial^i f(\hat{v}_s) V^{(i)} _{st} 
- \sum_{i \in I}  \partial^i f(\hat{w}_s) W^{(i)} _{st} \right| \notag \\
& \leqq  \sum_{i \in I} \left \{  | \partial^i f(\hat{v}_s) - \partial^i f(\hat{w}_s)||V^{(i)}_{st}| 
+ |\partial^i f(\hat{w}_s)| | V^ {(i)} _{st} - W^{(i)}_{st}  | \right \} \notag \\
& \leqq K \epsilon (1+eM) (1+n)^e  (1+T)^{(n+1)\beta} |t-s|^{\alpha}.
\end{align}
By ($\ref{r}$), ($\ref{step1main}$), and the mean value theorem, for all $s \leqq u \leqq t$, 
\begin{align*}
& \left |J^{(1)}(\mathbb{V})_{su} + J^{(1)}(\mathbb{V})_{ut} - J^{(1)}(\mathbb{V})_{st} - \left \{ J^{(1)}(\mathbb{W})_{su} + J^{(1)}(\mathbb{W})_{ut} - J^{(1)}(\mathbb{W})_{st} \right \}  \right| \\
 &\leqq  \sum_{i \in I} \left | R_i (\mathbb{V}) V^{(i)}_{ut}  - R_i(\mathbb{W})  W^{(i)}_{ut} \right|\\
 &\leqq  \sum_{i \in I} \left | R_i (\mathbb{V}) - R_i(\mathbb{W}) | |V^{(i)}_{ut}| + | R_i(\mathbb{W}) |  |V^{(i)}_{ut}-  W^{(i)}_{ut} \right|\\
& \leqq (2e+1) K \epsilon  (1+n)^{2e+1} (1+T)^{\beta} (1+M)^{n+2} |t-s|^{(n+1)\beta + \alpha} \\
& \quad + K \epsilon  (1+n)^{2e} (1+M)^{n+1} |t-s|^{(n+1)\beta + \alpha}  \\
& \leqq (2e+2) K \epsilon  (1+n)^{2e+1}   (1+T)^{\beta} (1+M)^{n+2} |t-s|^{(n+1)\beta + \alpha}.
\end{align*}
By Lemma~$\ref{itten}$, there exists $p \in \{1,...,N \}$ such that (\ref{p}) holds. By the above inequality, we have that
\begin{align*}
 &\left  |J^{(1)}(\mathbb{V})_{st}(\mathcal{P}) - J^{(1)}(\mathbb{V})_{st}(\mathcal{P} \backslash \{ t_p \}) - \left \{ J^{(1)}(\mathbb{W})_{st}(\mathcal{P}) - J^{(1)}(\mathbb{W})_{st} \left (\mathcal{P} \backslash \{t_p\} \right) \right \} \right| \\
& = \bigg | J^{(1)}(\mathbb{V})_{t_{p-1}t_p} + J^{(1)}(\mathbb{V})_{t_pt_{p+1}} - J^{(1)}(\mathbb{V})_{t_{p-1}t_{p+1}} \\
& \hspace{1.5cm} - \left \{ J^{(1)}(\mathbb{W})_{t_{p-1}t_p} + J^{(1)}(\mathbb{W})_{t_pt_{p+1}} - J^{(1)}(\mathbb{W})_{t_{p-1}t_{p+1}} \right \} \bigg| \\
& \leqq (2e+2) K \epsilon  (1+n)^{2e+1}   (1+T)^{\beta} (1+M)^{n+2}  |t_{p+1} - t_{p-1} |^{(n+1)\beta + \alpha}\\
& \leqq (2e+2) K \epsilon  (1+n)^{2e+1}   (1+T)^{\beta} (1+M)^{n+2}  \left (\frac{2}{N-1} \right)^{(n+1)\beta + \alpha} |t-s|^{(n+1)\beta +\alpha}.
\end{align*}
This implies that (note that $(n+1)\beta + \alpha > 1$)
\begin{align}\label{claim32}
&  \left  |J^{(1)}(\mathbb{V})_{st}(\mathcal{P}) - J^{(1)}(\mathbb{V})_{st} - \{ J^{(1)}(\mathbb{W})_{st}(\mathcal{P}) - J^{(1)}(\mathbb{W})_{st} \} \right | \notag \\
& \leqq (2e+2) K \epsilon  (1+n)^{2e+1}   (1+T)^{\beta} (1+M)^{n+2}  2^{(n+1)\beta + \alpha} \zeta((n+1)\beta +\alpha) |t-s|^{(n+1)\beta +\alpha}.
\end{align}
Therefore, by $(\ref{claim31})$ and $(\ref{claim32})$, we conclude that 
\begin{align*}
&\left |J^{(1)}(\mathbb{V})_{st}(\mathcal{P}) - J^{(1)}(\mathbb{W})_{st}(\mathcal{P}) \right| \\
& \leqq  
\left |J^{(1)}(\mathbb{V})_{st} - J^{(1)}(\mathbb{W})_{st} \right |  + \left |J^{(1)}(\mathbb{V})_{st}(\mathcal{P}) - J^{(1)}(\mathbb{V})_{st} - \left \{  J^{(1)}(\mathbb{W})_{st}(\mathcal{P}) - J^{(1)}(\mathbb{W})_{st} \right \} \right|\\
& \leqq  K \epsilon (1+eM) (1+n)^e  (1+T)^{(n+1)\beta} |t-s|^{\alpha}\\
& \quad + (2e+2) K \epsilon  (1+n)^{2e+1}   (1+T)^{\beta} (1+M)^{n+2}  2^{(n+1)\beta + \alpha} \zeta((n+1)\beta +\alpha) |t-s|^{(n+1)\beta +\alpha} \\
& \leqq K \epsilon C_3 |t-s|^{\alpha}.
\end{align*}
Taking $|\mathcal{P}| \searrow 0$, we prove \eqref{claim3}.
\end{proof}

\item[(Claim~4)]
Suppose that there exist $M>0$ and $\epsilon >0$ such that 
\begin{equation*}
|\hat{V}_{st}| \vee |\hat{W}_{st}|  \leqq  M |t-s|^{\beta}, \quad 
|V^{(i)}_{st}| \vee | W^{(i)}_{st}|  \leqq M |t-s|^{|i|\beta + \alpha},
\end{equation*}
\begin{equation*}
|\mathbf{V}^{(jk)}_{st}| \vee |\mathbf{W}^{(jk)}_{st}|  \leqq  M |t-s|^{ |j+k| \beta + 2 \alpha},\quad 
|\hat{V}_{st} - \hat{W}_{st}|  \leqq  \epsilon |t-s|^{\beta}, 
\end{equation*}
and
\begin{equation*}
|V^{(i)} _{st} - W^{(i)}_{st}|  \leqq \epsilon |t-s|^{|i|\beta + \alpha}, \ \  
|\mathbf{V}^{(jk)}_{st} - \mathbf{W}^{(jk)}_{st}|  \leqq  \epsilon |t-s|^{|j+k|\beta + 2\alpha}.
\end{equation*}
Then 
\begin{equation}\label{claim4}
\left | \left (\int f(\hat{\mathbb{V}}) \D {\mathbb{V}} \right )^{(2)}_{st} - \left (\int f(\hat{\mathbb{W}}) \D {\mathbb{W}} \right)^{(2)}_{st} \right| \leqq K^2 \epsilon C_4  |t-s|^{2 \alpha},
\end{equation}
where
\[
C_4 := (1+m)^{2e} (1+2eM) (1+T)^{(m+1)\beta} + (1+T^{(n-m)\beta}) (\tilde{C}_4 + 4 C_1 C_3) 2^{(m+1)\beta + 2\alpha} \zeta((m+1)\beta + 2\alpha),
\]
\[
\tilde{C}_4 := (15e + 7)(1+n+m)^{3e} (1+M)^{m+3} (1+T)^{(2n-m)\beta}.
\]
In particular, the integration map is Lipschitz continuous.

\begin{proof}
The assumption and the mean value theorem imply that
\begin{align}\label{claim41}
& \left |J^{(2)}(\mathbb{V})_{st} - J^{(2)}(\mathbb{W})_{st} \right | \notag \\
&\leqq \sum_{(j,k)\in J} 
 \left | \partial^j f(\hat{v}_s) \partial^k f(\hat{v}_s) \mathbf{V}^{(jk)}_{st} - \partial^j f(\hat{w}_s) \partial^k f(\hat{w}_s) \mathbf{W}^{(jk)}_{st} \right | \notag \\
 &  \leqq  K^2 \epsilon (1+m)^{2e} (2 e M+1) (1+T)^{(m+1)\beta}  |t-s|^{ 2 \alpha}.
\end{align}
On the other hand, by $(\ref{s_1})$ and $(\ref{s_2})$, we can calculate
\begin{align*}
  & \left | S_1(\mathbb{V}) - S_1(\mathbb{W})  \right|  \\
  & \leqq \sum_{(j,k) \in J}
  | \partial^j f(\hat{v}_s)  R^{(1)}_{jk}(\mathbb{V}) V^{(j)}_{su} \otimes V^{(k)}_{ut} 
  -\partial^j f(\hat{w}_s)  R^{(1)}_{jk} (\mathbb{W}) W^{(j)}_{su} \otimes W^{(k)}_{ut}  |\\
  & \quad + \sum_{(j,k) \in I \times I \backslash J} 
  | \partial^j f(\hat{v}_s) \partial^k f(\hat{v}_u) V^{(j)}_{su} \otimes V^{(k)}_{ut}
  -\partial^j f(\hat{w}_s) \partial^k f(\hat{w}_u) W^{(j)}_{su} \otimes W^{(k)}_{ut}|\\
&\leqq K^2 \epsilon (1+m)^{3e} (1+M)^{m+3} (1+T)^{\beta} (5e+2) |t-s|^{(m+1)\beta+2\alpha} \\
& \quad + K^2 \epsilon (1+n)^{2e} (1+M)^2 (1+T)^{(2n-m)\beta} (2e+2)|t-s|^{(m+1)\beta+2\alpha} \\
&\leqq K^2 \epsilon (1+n+m)^{3e} (1+M)^{m+3}(1+T)^{(2n-m)\beta} (7e+4)  |t-s|^{(m+1)\beta + 2 \alpha},
\end{align*}
and 
\begin{align*}
 & \left | S_2(\mathbb{V}) - S_2(\mathbb{W})  \right|\\
 &\leqq \sum_{(j,k)\in J}\sum_{|p| \leqq m-|j+k|}
 \left|  \partial^{j+p} f(\hat{v}_s) R^{(3)}_{jkp} (\mathbb{V}) (\hat{V}_{su})^{p} \mathbf{V}^{(jk)}_{ut} 
 -  \partial^{j+p} f(\hat{w}_s) R^{(3)}_{jkp} (\mathbb{W}) (\hat{W}_{su})^{p} \mathbf{W}^{(jk)}_{ut} \right|\\
 & \quad + \sum_{(j,k)\in J}\sum_{|p| \leqq m-|j+k|} \left| \partial^{k} f(\hat{v}_u) R^{(2)}_{jk} (\mathbb{V}) \mathbf{V}^{(jk)}_{ut} 
 - \partial^{k} f(\hat{w}_u) R^{(2)}_{jk} (\mathbb{W}) \mathbf{W}^{(jk)}_{ut}\right|\\
 &\leqq K^2 \epsilon  (5e+2) (1+m)^{3e+1} (1+T)^{\beta} (1+M)^{m+2}|t-s|^{(m+1)\beta+2\alpha}\\
 & \quad + K^2 \epsilon  (3e+1) (1+m)^{3e+1} (1+M)^{m+2}|t-s|^{(m+1)\beta+2\alpha} \\
 &\leqq  K^2 \epsilon (8e +3)  (1+m)^{3e+1} (1+T)^{\beta} (1+M)^{m+2} |t-s|^{(m+1)\beta +2\alpha}.
\end{align*}
Therefore, we have
\begin{align*}
& \left | \Sigma(\mathbb{V})_{sut}  - \Sigma(\mathbb{W})_{sut} \right|\\
& \leqq \left | S_1(\mathbb{V}) - S_1(\mathbb{W}) \right | + \left | S_2(\mathbb{V}) - S_2(\mathbb{W}) \right | \leqq  K^2 \epsilon \tilde{C}_4  |t-s|^{(m+1)\beta +2\alpha }, 
\end{align*}
where
\[
\Sigma_{sut} (\mathbb{V}) := J^{(2)}(\mathbb{V})_{su}  + J^{(2)}(\mathbb{V})_{ut} 
+ J^{(1)}(\mathbb{V})_{su} \otimes J^{(1)}(\mathbb{V})_{ut} - J^{(2)}(\mathbb{V})_{st}, \quad s \leqq u \leqq t
\]
and 
\begin{align*}
 \tilde{C}_4 = (15e + 7)(1+n+m)^{3e} (1+M)^{m+3} (1+T)^{(2n-m)\beta} .
\end{align*}
Let 
\[
\Gamma(\mathbb{V})_{sut} := Y^{(1)}(\mathbb{V})_{su} \otimes Y^{(1)}(\mathbb{V})_{ut} 
- J^{(1)}(\mathbb{V})_{su} \otimes J^{(1)}(\mathbb{V})_{ut}, \quad s \leqq u \leqq t.
\]  
Then by $(\ref{step1y})$, $(\ref{step1j})$, $(\ref{claim3})$, and $(\ref{claim32})$, we have
\begin{align*}
 &  \left | \Gamma(\mathbb{V})_{sut}  -\Gamma(\mathbb{W})_{sut}  \right | \\
& \leqq \big|Y^{(1)}(\mathbb{V})_{su} \otimes (Y^{(1)}(\mathbb{V})_{ut} - J^{(1)}(\mathbb{V})_{ut}) 
- Y^{(1)}(\mathbb{W})_{su} \otimes (Y^{(1)}(\mathbb{W})_{ut} - J^{(1)}(\mathbb{W})_{ut}) \big|\\
& \quad +  \big|(Y^{(1)}(\mathbb{V})_{su} - J^{(1)}(\mathbb{V})_{su} ) \otimes J^{(1)}(\mathbb{V})_{ut}
- (Y^{(1)}(\mathbb{W})_{su} - J^{(1)}(\mathbb{W})_{su}) \otimes  J^{(1)}(\mathbb{W})_{ut} \big|\\
 & \leqq \left | Y^{(1)}(\mathbb{V})_{su} \right | \left | Y^{(1)}(\mathbb{V})_{ut} - J^{(1)}(\mathbb{V})_{ut} - Y^{(1)}(\mathbb{W})_{ut} + J^{(1)}(\mathbb{W})_{ut} \right | \\
& \quad + \left |Y^{(1)}(\mathbb{V})_{su} - Y^{(1)}(\mathbb{W})_{su} \right | \left |Y^{(1)}(\mathbb{W})_{ut} - J^{(1)}(\mathbb{W})_{ut} \right|\\
& \quad +\left | Y^{(1)}(\mathbb{V})_{ut} - J^{(1)}(\mathbb{V})_{ut} - Y^{(1)}(\mathbb{W})_{ut} + J^{(1)}(\mathbb{W})_{ut} \right | \left | J^{(1)}(\mathbb{V})_{ut} \right | \\
& \quad + \left | Y^{(1)}(\mathbb{W})_{ut} - J^{(1)}(\mathbb{W})_{ut} \right|  \left|J^{(1)}(\mathbb{V})_{ut} - J^{(1)}(\mathbb{W})_{ut} \right |\\
 & \leqq K^2 \epsilon 4 C_1 C_3 |t-s|^{(n+1)\beta + 2\alpha}.
\end{align*}
By Lemma~$\ref{itten}$, there exists $p \in \{1,...,N \}$ such that (\ref{p}) holds. Then we have
\begin{align*}
&  \left  | J^{(2)}(\mathbb{V})_{st}(\mathcal{P}) - J^{(2)}(\mathbb{V})_{st}(\mathcal{P} \backslash \{t_p \})  - \left \{ J^{(2)}(\mathbb{W})_{st}(\mathcal{P} ) - J^{(2)}(\mathbb{W})_{st}(\mathcal{P} \backslash \{t_p \}) \right \} \right | \\
& \leqq \left  | \Sigma(\mathbb{V})_{t_{p-1} t_p t_{p+1}}  -  \Sigma(\mathbb{W})_{t_{p-1} t_p t_{p+1} } \right| + \left | \Gamma(\mathbb{V})_{t_{p-1} t_p t_{p+1}} - \Gamma(\mathbb{V})_{t_{p-1} t_p t_{p+1}} \right |\\
& \leqq   K^2 \epsilon \tilde{C}_4 |t_{i+1}-t_{i-1}|^{(m+1)\beta +2\alpha}  + K^2 \epsilon 4 C_1 C_3  |t_{i+1}-t_{i-1}|^{(n+1)\beta + 2\alpha} \\
& \leqq  K^2 \epsilon (1+T^{(n-m)\beta}) (\tilde{C}_4 + 4 C_1 C_3) \left (\frac{2}{N-1} \right )^{(m+1)\beta + 2\alpha } |t-s|^{(m+1)\beta + 2\alpha}. 
\end{align*}
This implies that (note that $(m+1)\beta + 2\alpha > 1$)
\begin{align}\label{claim42}
& \left |J^{(2)}(\mathbb{V})_{st}(\mathcal{P}) - J^{(2)}(\mathbb{V})_{st} - \left \{ J^{(2)}(\mathbb{W})_{st}(\mathcal{P}) - J^{(2)}(\mathbb{W})_{st} \right \} \right | \notag \\
& \leqq K^2 \epsilon (1+T^{(n-m)\beta}) (\tilde{C}_4 + 4 C_1 C_3) 2^{(m+1)\beta + 2\alpha} \zeta((m+1)\beta + 2\alpha) |t-s|^{(m+1)\beta+2\alpha}.
\end{align}
Therefore, by $(\ref{claim41})$ and $(\ref{claim42})$ we conclude that
\begin{align*}
& \left |J^{(2)}(\mathbb{V})_{st}(\mathcal{P}) -  J^{(2)}(\mathbb{W})_{st}(\mathcal{P}) \right| \notag \\
 & \leqq \left|J^{(2)}(\mathbb{V})_{st} - J^{(2)}(\mathbb{W})_{st} \right|\\
& \quad +  \left |J^{(2)}(\mathbb{V})_{st}(\mathcal{P}) - J^{(2)}(\mathbb{V})_{st} - \{ J^{(2)}(\mathbb{W})_{st}(\mathcal{P}) - J^{(2)}(\mathbb{W})_{st} \} \right |\\
 & \leqq K^2 \epsilon C_4 |t-s|^{2\alpha}.
\end{align*}
Taking $|\mathcal{P}| \searrow 0$, we have \eqref{claim4}.

For any $\mathbb{V}, \mathbb{W} \in \mathcal{E}_M$, take $\epsilon := d_{(\alpha,\beta)}(\mathbb{V}, \mathbb{W}) $. Then we have
\begin{equation*}
|\hat{V}_{st}| \vee |\hat{W}_{st}|  \leqq  M |t-s|^{\beta}, \quad 
|V^{(i)}_{st}| \vee | W^{(i)}_{st}|  \leqq M |t-s|^{|i|\beta + \alpha},
\end{equation*}
\begin{equation*}
|\mathbf{V}^{(jk)}_{st}| \vee |\mathbf{W}^{(jk)}_{st}|  \leqq  M |t-s|^{ |j+k| \beta + 2 \alpha},\quad 
|\hat{V}_{st} - \hat{W}_{st}|  \leqq  \epsilon |t-s|^{\beta}, 
\end{equation*}
and
\begin{equation*}
|V^{(i)} _{st} - W^{(i)}_{st}|  \leqq \epsilon |t-s|^{|i|\beta + \alpha}, \ \  
|\mathbf{V}^{(jk)}_{st} - \mathbf{W}^{(jk)}_{st}|  \leqq  \epsilon |t-s|^{|j+k|\beta + 2\alpha}.
\end{equation*}
Therefore, by (\ref{claim3}) and (\ref{claim4}) we conclude that for all $ \mathbb{V}, \mathbb{W} \in \mathcal{E}_M$,
\begin{align*}
 d_{\alpha}  \left (\int f(\hat{\mathbb{V}}) \D \mathbb{V}, \int f(\hat{\mathbb{W}}) \D \mathbb{W} \right) 
  & \leqq K C_3  \epsilon +  K^2 C_4 \epsilon  \\
  & \leqq K (C_3 + K C_4) d_{(\alpha,\beta)} (\mathbb{V},\mathbb{W}),
\end{align*}
and this is the claim.
\end{proof}
\end{description}
Claims~1--4 complete the proof of Theorem~$\ref{reconst}$.
\end{proof}

\subsection{Proof of Proposition~\ref{lift}}\label{proof_lift}
We use the following lemmas.
\begin{lemma}[\cite{Nu} Proposition 1.1.2]\label{wiener}
\[
I_1 (g) I_p (g^{\otimes p}) = I_{p+1} (g^{\otimes (p+1)}) + p ||g||^2_{L^2} I_{p-1} (g^{\otimes (p-1)}),
\quad g \in L^2(\mathbb{R}_+), \ p \geqq 1.
\]
\end{lemma}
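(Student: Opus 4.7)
The plan is to reduce Lemma~\ref{wiener} to the classical three-term recurrence for the probabilists' Hermite polynomials, via the Hermite representation of multiple Wiener--It\^o integrals of the symmetric tensor powers of a single $L^2$ kernel. First I would fix $g \in L^2(\Real_+)$ and, assuming without loss of generality $g \not\equiv 0$, set $\sigma := \|g\|_{L^2}$ and $X := I_1(g)/\sigma$, so that $X$ is a standard Gaussian random variable. The key identity is
\begin{equation*}
I_p(g^{\otimes p}) = \sigma^p H_p(X), \qquad p \geqq 0,
\end{equation*}
where $H_p$ denotes the (monic) probabilists' Hermite polynomial. The cleanest derivation uses generating functions: on one hand the stochastic exponential admits the Wiener-chaos expansion
\begin{equation*}
\exp\!\left(\lambda I_1(g) - \tfrac{1}{2}\lambda^2 \sigma^2\right) = \sum_{p=0}^{\infty} \frac{\lambda^p}{p!}\, I_p(g^{\otimes p}),
\end{equation*}
and on the other hand the classical Hermite generating function gives $\exp(\lambda \sigma X - \lambda^2 \sigma^2 / 2) = \sum_{p=0}^{\infty}(\lambda\sigma)^p H_p(X)/p!$; matching coefficients of $\lambda^p$ yields the representation.

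Next I invoke the three-term recurrence $x H_p(x) = H_{p+1}(x) + p H_{p-1}(x)$, which follows immediately from differentiating the Hermite generating function once in $x$. Multiplying the Hermite representation of $I_p(g^{\otimes p})$ by $I_1(g) = \sigma X$ and applying the recurrence gives
\begin{equation*}
I_1(g)\, I_p(g^{\otimes p}) = \sigma^{p+1}\bigl(H_{p+1}(X) + p\, H_{p-1}(X)\bigr) = I_{p+1}(g^{\otimes(p+1)}) + p\,\|g\|_{L^2}^2\, I_{p-1}(g^{\otimes(p-1)}),
\end{equation*}
where in the last equality I re-apply the Hermite representation, absorbing one factor $\sigma^2$ into $p\|g\|_{L^2}^2$ and identifying $\sigma^{p-1} H_{p-1}(X) = I_{p-1}(g^{\otimes(p-1)})$ and $\sigma^{p+1}H_{p+1}(X) = I_{p+1}(g^{\otimes(p+1)})$.

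The only non-elementary step is the Hermite representation; everything that follows is algebra. An alternative It\^o-theoretic route is to realize $I_p(g^{\otimes p})$ as $p!$ times the iterated It\^o integral $\int_0^\infty g(t_p) \int_0^{t_p} g(t_{p-1}) \cdots \int_0^{t_2} g(t_1)\,\D W_{t_1}\cdots \D W_{t_p}$ and apply It\^o's product rule to $I_1(g)\,I_p(g^{\otimes p})$: the martingale component regenerates $I_{p+1}(g^{\otimes(p+1)})$, while the quadratic-covariation bracket contributes $p\|g\|_{L^2}^2 I_{p-1}(g^{\otimes(p-1)})$ after a short induction on $p$. Both approaches sidestep the full product formula $I_p(f)I_q(h) = \sum_{r} r!\binom{p}{r}\binom{q}{r} I_{p+q-2r}(f \otimes_r h)$, whose contraction-based combinatorial proof is heavier and, in the single-kernel symmetric case at hand, unnecessary.
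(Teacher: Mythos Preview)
Your argument is correct. The Hermite representation $I_p(g^{\otimes p}) = \sigma^p H_p(I_1(g)/\sigma)$ for the monic probabilists' Hermite polynomials is standard, and combining it with the three-term recurrence $xH_p(x) = H_{p+1}(x) + pH_{p-1}(x)$ yields the identity immediately. The alternative It\^o-product-rule route you sketch also works.

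There is nothing to compare against here: the paper does not give its own proof of this lemma but simply quotes it from Nualart (Proposition~1.1.2), using it as a black box to expand $(\hat{X}^{(1)}_{sr})^m$ into a linear combination of multiple Wiener--It\^o integrals. Your write-up supplies exactly the proof that Nualart gives in that reference (Hermite polynomials and their recurrence), so in effect you have reproduced the cited source rather than diverged from the paper.
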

\begin{lemma}[\cite{inahama} Corollary 9.7]\label{hyper}
Let $Y$ belong to the $m$-th Wiener chaos and $p \geqq 2$. Then we have
\[
||Y||_{p} \leqq \sqrt{m+1}(p-1)^{m/2} || Y ||_{2}. 
\] 
\end{lemma}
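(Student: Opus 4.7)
The plan is to deduce this hypercontractivity bound from Nelson's classical hypercontractivity inequality for the Ornstein--Uhlenbeck semigroup $(P_t)_{t \geqq 0}$ acting on Wiener space. Recall that Nelson's theorem asserts $||P_t Y||_p \leqq ||Y||_2$ whenever $e^{2t} \geqq p-1$, and that $P_t$ acts on the $k$-th Wiener chaos as multiplication by $e^{-kt}$. These two facts together give, essentially by a one-line computation, the pure-chaos version of the stated bound.

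First I would handle the strict $k$-th chaos case. If $Z$ belongs to the $k$-th Wiener chaos then $P_t Z = e^{-kt} Z$, so Nelson's bound gives $e^{-kt} ||Z||_p \leqq ||Z||_2$ for every admissible $t$. The optimal choice $t = \tfrac{1}{2}\log(p-1)$, which is admissible for $p \geqq 2$, yields
\[
||Z||_p \leqq (p-1)^{k/2} ||Z||_2.
\]
This is the sharp Nelson bound for elements of a fixed chaos.

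The extra $\sqrt{m+1}$ factor in the statement indicates that the lemma is in fact applied to elements lying in the direct sum of Wiener chaoses of order at most $m$, rather than in a single homogeneous chaos. Under that reading, decompose $Y = \sum_{k=0}^{m} Y_k$ with each $Y_k$ in the $k$-th chaos, and combine the triangle inequality with the chaos-wise estimate and Cauchy--Schwarz together with the $L^2$-orthogonality of distinct Wiener chaoses:
\[
||Y||_p \leqq \sum_{k=0}^{m} ||Y_k||_p \leqq (p-1)^{m/2} \sum_{k=0}^{m} ||Y_k||_2 \leqq \sqrt{m+1}\,(p-1)^{m/2}\, ||Y||_2,
\]
where the last inequality uses $\sum_{k=0}^m ||Y_k||_2 \leqq \sqrt{m+1}\,\bigl(\sum_k ||Y_k||_2^2\bigr)^{1/2} = \sqrt{m+1}\,||Y||_2$.

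The substantive ingredient is Nelson's hypercontractivity estimate itself, which I would invoke as a black box rather than reprove (its standard proof proceeds via the two-point inequality and tensorization, then a central limit argument to pass to the Gaussian setting). The only technical care needed here is to pin down the precise hypothesis on $Y$, namely that it lies in a finite sum of chaoses up to order $m$, and to apply Cauchy--Schwarz to the orthogonal chaos decomposition so that the factor $\sqrt{m+1}$ emerges with the correct constant.
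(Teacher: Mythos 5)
The paper does not actually prove this lemma: it is imported verbatim as Corollary~9.7 of the cited reference \cite{inahama}, so there is no in-paper argument to compare against. Your derivation is correct and is the standard one. Nelson's hypercontractivity for the Ornstein--Uhlenbeck semigroup, combined with the eigenvalue relation $P_tY_k=e^{-kt}Y_k$ on the $k$-th homogeneous chaos, gives the sharp bound $\|Y_k\|_p\leqq (p-1)^{k/2}\|Y_k\|_2$ for each fixed $k$, and the factor $\sqrt{m+1}$ then emerges exactly as you say from the triangle inequality in $L^p$ followed by Cauchy--Schwarz over the orthogonal decomposition $Y=\sum_{k=0}^m Y_k$. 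You were also right to pin down the hypothesis: read literally as membership in the homogeneous $m$-th chaos, the stated inequality is weaker than Nelson's bound and holds a fortiori, whereas the $\sqrt{m+1}$ is genuinely needed only for variables in the inhomogeneous chaos of order at most $m$, which is the form in which the lemma is used in the proofs of Proposition~\ref{lift} and Lemma~\ref{inverse}. Treating Nelson's inequality itself as a black box is entirely appropriate here; the only ingredient beyond it is the orthogonality of distinct chaoses in $L^2$, which you invoke correctly.
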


\begin{proof}[Proof of Proposition~\ref{lift}]
(i) Because $\gamma < 1/2$, $\hat{X}$ is well-defined and one can prove that $\mathcal{K} W(t) = \int_0^t \kappa (t-r) \D W_r$. The modified Chen's relation follows from the binomial theorem as illustrated in the Introduction.
For the \hd property,
by Kolmogorov's continuity theorem (see Theorem 3.1 in \cite{FH}), it is sufficient to prove the following inequalities:  for $p \geqq 2, \  i\in I, \ (j,k) \in J$, and $ (s,t) \in \Delta_T$
\[
|| X^{(i)}_{st} ||_{p} 
\leqq C|t-s|^{|i| \zeta + 1/2}, \quad 
|| \mathbf{X}^{(jk)}_{st}||_{p} \leqq C |t-s|^{|j+k| \zeta + 1}.
\]
Fix $s < r < t$. Note that $\hat{X}^{(1)}_{sr} = \int_0^r \kappa_{sr} (u) \D W_u$. Then by using Lemma~\ref{wiener} repeatedly, we have that for all $m \in \mathbb{Z}_{+}$,
\[
    \left( \hat{X}^{(1)}_{sr} \right)^{2m} 
    = \sum_{l=0}^m \tilde{c}_{l,m} I_{2l} (\kappa_{sr}^{\otimes 2l} ) ||\kappa_{sr}||^{2m-2l}_{L^2}, 
\]
\[
     \left( \hat{X}^{(1)}_{sr} \right)^{2m+1} 
    = \sum_{l=0}^m c_{l,m} I_{2l+1} (\kappa_{sr}^{\otimes (2l +1)} ) ||\kappa_{sr}||^{2m-2l}_{L^2},
\]
where $c_{0,0} =1$,
\[
\tilde{c}_{l,m} = 
\begin{cases}
    c_{0,m-1} & l=0,\\
    c_{l-1,m-1} + (2l+1)c_{l,m-1} & l=1,...,m-1,\\
    1 & \text{otherwise},
\end{cases}
\]
and
\[
c_{l,m} = 
\begin{cases}
    \tilde{c}_{l,m} + 2(l+1) \tilde{c}_{l+1,m} & l=0,...,m-1,\\
    1 & \text{otherwise}.
\end{cases}
\]
Then the assumption $\gamma < 1/2 $ and Lemma~\ref{hyper} imply that for all $m \in \mathbb{Z}_+$, 
\begin{align*}
    & \left| \left| \int_s^t \left( \hat{X}^{(1)}_{sr} \right)^{2m} \left( \hat{X}^{(2)}_{sr} \right)^{i_2} \D X_r    \right| \right|_{p} \\
    & \leqq \sum_{l=0}^m \tilde{c}_{l,m} \left| \left| \int_s^t I_{2l} (\kappa_{sr}^{\otimes 2l}) ||\kappa_{sr}||^{2m-2l}_{L^2} |r-s|^{\zeta i_2} \D X_r    \right| \right|_{p}\\
    & \leqq \sum_{l=0}^m \tilde{c}_{l,m} p^{(2l+1)/2} \left| \left| \int_s^t I_{2l} (\kappa_{sr}^{\otimes 2l}) ||\kappa_{sr}||^{2m-2l}_{L^2} |r-s|^{\zeta i_2} \D X_r    \right| \right|_{2}\\
    & \leqq p^{(i_1+1)/2} \left( \sum_{l=0}^m \tilde{c}_{l,m} p^{l-m} \right) |t-s|^{|i|\zeta +1/2 - i_1/2(2\gamma -1)}\\
    & \leqq C p^{(i_1+1)/2} \left( \sum_{l=0}^m \tilde{c}_{l,m} p^{l-m} \right ) |t-s|^{|i|\zeta +1/2},
\end{align*}
and 
\begin{align*}
   & \left| \left| \int_s^t \left( \hat{X}^{(1)}_{sr} \right)^{2m+1} \left( \hat{X}^{(2)}_{sr} \right)^{i_2} \D X_r    \right| \right|_{p} 
    \\& \leqq \sum_{l=0}^m c_{l,m} \left| \left| \int_s^t I_{2l+1} (\kappa_{sr}^{\otimes (2l+1)}) ||\kappa_{sr}||^{2m-2l}_{L^2} |r-s|^{\zeta i_2} \D X_r    \right| \right|_{p}\\
    & \leqq \sum_{l=0}^m c_{l,m} p^{(2l+2)/2} \left| \left| \int_s^t I_{2l+1} (\kappa_{sr}^{\otimes (2l+1)}) ||\kappa_{sr}||^{2m-2l}_{L^2} |r-s|^{\zeta i_2} \D X_r    \right| \right|_{2}\\
    & \leqq p^{(i_1+1)/2} \left( \sum_{l=0}^m c_{l,m} p^{l-m} \right) |t-s|^{|i|\zeta +1/2 - i_1/2(2\gamma -1)}\\
    & \leqq C p^{(i_1+1)/2} \left( \sum_{l=0}^m c_{l,m} p^{l-m} \right ) |t-s|^{|i|\zeta +1/2}.
\end{align*}
Therefore, we conclude that for all $i = (i_1,i_2) \in \mathbb{Z}_+^2$, 
\begin{align}\label{Lp1}
  || X^{(i)}_{st} ||_{p} =  \left| \left| \int_s^t \left( \hat{X}^{(1)}_{sr} \right)^{i_1} \left( \hat{X}^{(2)}_{sr} \right)^{i_2} \D X_r    \right| \right|_{p} 
    \leqq C p^{ (i_1 +1)/2} |t-s|^{|i|\zeta + 1/2},
\end{align}
and this implies the claim. By the same argument, we have
\begin{equation}\label{Lp2}
|| \mathbf{X}^{(jk)}_{st}||_{p} \leqq C p^{(j_1+k_1+2)/2} |t-s|^{|j+k|\zeta + 1} , \quad (j,k) = ((j_1,j_2), (k_1,k_2)) \in \mathbb{Z}_+^2 \times \mathbb{Z}_+^2. 
\end{equation}

(ii) By (i) and Theorem~$\ref{reconst}$, for a.s.\ $\omega$, the limit
\[
\left( \int f(\hat{\mathbb{X}}) \D {\mathbb{X}} \right )_{st}^{(1)} 
= \lim_{N \to \infty} \sum_{q =1}^{N} \sum_{i \in I} \partial^i f(\hat{X}_{t_{q-1}})X^{(i)}_{t_{q-1} t_{q}}
\]
exists. Because 
\[
\int_{s}^{t} f(\hat{X}_r) \D X_r = \lim_{N \to \infty} \sum_{q =1}^{N}  f(\hat{X}_{t_{q-1}})X^{(0)}_{t_{q-1} t_{q}}
\]
 in the sense of the convergence in probability, 
 it is sufficient to prove that for all $i \in I \backslash \{ 0\}$,  
\[
\lim_{N \to \infty} \sum_{q =1}^{N} \partial^i f(\hat{X}_{t_{q-1}})X^{(i)}_{t_{q-1} t_{q}} = 0
\]
in probability. Fix $i \in I \backslash \{ 0\}$. 
We can assume $f \in C^{n+2}_b$ without loss of generality.
By the result (i), we have
\[
\EXP{ \left (X^{(i)}_{st} \right)^2} = C |t-s|^{2|i| \zeta +1} < \infty,
\]
and so taking $K := ||f||_{C_b^{n+2}}$, we conclude that
\begin{align*}
\EXP{ \left( \sum_{q=1}^{N} \partial^i f(\hat{x}_{t_{q-1}}) X^{(i)}_{t_{q-1} , t_q} \right)^2} 
& = \sum_{q=1}^{N} \EXP{ \left( \partial^i f(\hat{x}_{t_{q-1}}) X^{(i)}_{t_{q-1} t_q } \right)^2 }\\
& \leqq K^2 \sum_{q=1}^{N} |t_{q} - t_{q-1}|^{2|i|\zeta+1}\\
& = K^2 \left(  \sup_{ |t-s| \leqq |\mathcal{P}|} |t-s| \right)^{2|i|\zeta} T\\
&  \to 0 \quad  (as \  |\mathcal{P}| \searrow 0),
\end{align*}
and this indicates the $L^2$ convergence.  
\end{proof}

\subsection{Proof of Theorem~\ref{ldp}}\label{pr3}
Denote by $C_{[0,T]}$ the set of the $\Real$-valued continuous functions on $[0,T]$ equipped with the uniform topology.
Let $C_{\Delta_T}$ be the set of continuous functions on $\Delta_T$, taking values in $\Real^D$, equipped with the uniform topology  
for the metric
\[
d(X,Y) := \sup_{(s,t) \in \Delta_T} \left | X_{st} - Y_{st} \right |, \quad X,Y \in C_{\Delta_T}.
\]
We use the same notation  $C_{\Delta_T}$ for different dimensions $D$, 
more specifically any one of $D=1$, $D = \max \{|i| | \ i \in I \}$, or $D = \max\{|j+k| |\  (j,k)\in J\}$.
Let $\mathcal{S}_0$ be the set of the $\mathbb{R}$-valued $\{ \mathcal{F}_t \}$-adapted simple processes on $ [0,T] \times \Omega $ and
\[
\mathcal{S} := \left \{ Z \in \mathcal{S}_0 \middle | \sup_{t \in [0,T]} |Z_t| \leqq 1  \right\}.
\]
\begin{definition}[\cite{garcia}]
Let $\{V^n \}$ be a sequence of $\mathbb{R}$-valued semimartingales on $ [0,T] \times \Omega$ . We say that the sequence is uniformly exponentially tight (UET) if for every $T >0$ and every $a >0$ there is $K_{T,a}$ such that 
\begin{equation}\label{muet1}
\limsup_{n \to \infty } \frac{1}{n} \log  \sup_{Z\in \mathcal{S} } 
\PROB{  \sup_{t \in [0,T]}\left| (Z_{-} \cdot V^n)_{t} \right| \geqq K_{T,a}  } \leqq -a,
\end{equation}
where $Z_- \cdot V$ is the It\^{o} integral of $Z$ with respect to $V$:
\[
(Z_- \cdot V)_t := \int_{0}^{t} Z_{r-}  \D V_r,
\] 
\end{definition}
For a one-dimensional Brownian motion $W$, $V^n = n^{-1/2}W$ is an example of a UET sequence; see Lemma~2.4 of \cite{garcia}.

\begin{thm}\label{mainthm}
Let $\{U^n \}$ be a UET sequence of $\mathbb{R}$-valued semimartingales and $\{ V^n\}$ a sequence of $\mathbb{R}$-valued continuous adapted processes. Assume that the sequence  $\{ (U^n, V^n) \}$ satisfies the LDP on $C_{[0,T]} \times C_{[0,T]}$ with speed $n^{-1}$ and good rate function $\tilde{J}^{\ast}$. 
Then the sequence  $\{ ( U^n, V^n, (U^n \cdot_i V^n)_{i \in I }) \}$ satisfies the LDP on
$C_{[0,T]} \times C_{[0,T]} \times C_{\Delta_T}$ with speed $n^{-1}$ and good rate function 
\begin{equation}\label{Ishsh}
\begin{split}
\tilde{J}^{\ast\ast}(u,v,x) & := \begin{cases}
\tilde{J}^{\ast} (u,v), & v \in \mathrm{BV}, \ \text{$ \forall i \in I$}, x^{(i)} = u \cdot_{i} v, \\
\infty, & \text{otherwise},
\end{cases}\\
& = \inf \left \{ \tilde{J}^{\ast} (u,v) \middle| \ u,v \in C_{[0,T]}, v \in \mathrm{BV}, \   \forall i \in I, x^{(i)} =  u \cdot_i v \right \},
\end{split}
\end{equation}
where $\mathrm{BV}$ is the set of the functions of bounded variation on $[0,T]$, $x = (x^{(i)})_{i \in I} \in C_{\Delta_T}$ and 
\[
(u \cdot_i v )_{st} := \int_{s}^{t} (u_r -u_s )^i \D v_r.
\]
\end{thm}

\begin{proof}
By the assumption and the contraction principle, $\{ ( U^n,V^n, ((U^n)^i)_{i \in I}) \} $ satisfies the LDP with good rate function 
\[
\Lambda_1 (u,v, \varphi) = \inf  \left \{\tilde{J}^{\ast} (u,v) \middle | \ \forall i \in I,  \varphi^{(i)} = u^i \right \}.
\]
 Therefore, by \cite{garcia}[Theorem~1.2], we have that $\{ (U^n, V^n, ((U^n)^i, U^n \odot_i V^n)_{i \in I} ) \}$ satisfies the LDP with good rate function
 \begin{align*}
 \Lambda_2 (u,v,\varphi, x) 
 =  \inf  \left \{\tilde{J}^{\ast} (u,v) \middle |  u,v \in C_{[0,T]},  \ v \in \mathrm{BV}, \  (\varphi^{(i)}, x^{(i)} ) =( u^i,  u \odot_i v) \right \},
 \end{align*} 
 where $(u \odot_i v)_t : = (u \cdot_i v )_{0t}$.
 Note that by the modified Chen's relation \eqref{chen1}, we have
 \[
 (u \cdot_i v )_{st} 
 = (u \odot_i v )_{t} - (u \odot_i v )_{s} - \sum_{p < i} \frac{1}{(i-p) !} (u_s - u_0)^{i-p} (u \cdot_p v )_{st}. 
 \]
 Hence, by the contraction principle again with the aid of induction, we see that 
 $\{ (U^n, V^n, (U^n \cdot_i V^n)_{i \in I} ) \}$ satisfies the LDP with good rate function \eqref{Ishsh}.
\end{proof}

\begin{thm}
\label{maincor}
Under the same conditions as in Theorem~\ref{mainthm},
the sequence  
$$\{ ( \delta U^n, (U^n \cdot_i V^n)_{i \in I}, (U^n \ast_{jk} V^n)_{(j,k)\in J}) \}$$
satisfies the LDP
on $C_{\Delta_T}  \times C_{\Delta_T} \times C_{\Delta_T} $
with speed $n^{-1}$ with good rate function 
 \begin{equation} \label{Ifun}
 \begin{split}
 \tilde{J}^{\ast\ast\ast} (\hat{x},x, \mathbf{x})
 = \inf \left\{ \tilde{J}^{\ast} (u,v) \middle|
  \begin{aligned} 
  u,v \in C_{[0,T]} &, \  v \in  \mathrm{BV},\\
  \forall i \in I, \forall (j,k) \in J,     (\hat{x},x^{(i)}, \mathbf{x}^{(jk)}) &= (\delta u, u \cdot_i v, u \ast_{jk} v) 
  \end{aligned}
  \right\},
\end{split}
\end{equation}
where $(\delta u)_{st} :=u_t - u_s$ and 
\[
(u \ast_{jk} v)_{st} := \int_s^t  (u\cdot_j v)_{sr} (u_r-u_s)^k \D v_r.
\]

\end{thm}

\begin{proof}
By Theorem~$\ref{mainthm}$ and the contraction principle, the sequence
$$\left\{ \left( U^n,V^n, (U^n \cdot_i V^n)_{i \in I}, ((U^n \odot_j V^n)  (U^n)^k)_{(j,k) \in J}\right) \right\}$$ 
satisfies the LDP  with good rate function 
\begin{align*}
 \Lambda_3 (u,v,x, \varphi) 
 = \inf  \left \{\tilde{J}^{\ast} (u,v) \middle | u,v \in C_{[0,T]}, \ v \in \mathrm{BV},  (x^{(i)}, \varphi^{(jk)}) 
 = ( u \cdot_i v, (u \odot_j v)  u^k)  \right \}.
\end{align*}
 Therefore, by \cite{garcia}[Theorem~1.2], we have that 
$$\left\{ \left(U^n, (U^n \cdot_i V^n)_{i \in I},  (U^n \circledast_{jk} V^n)_{(j,k)\in J} \right) \right\}$$
satisfies the LDP with good rate function
\begin{align*}
\Lambda_4 (u,x, \varphi) 
=  \inf  \left \{\tilde{J}^{\ast} (u,v) \middle | u,v \in C_{[0,T]}, \ v \in \mathrm{BV}, \ ( x^{(i)},
\varphi^{(jk)}) = ( u \cdot_i v, u \circledast_{jk} v ) \right \},
 \end{align*}
where $(U \circledast_{jk} V)_t = (U \ast_{jk} V)_{0t}$.
Note that by the modified Chen's relation \eqref{chen2}, we have
\begin{align*}
  (U^n \ast_{jk} V^n)_{st} 
 & = (U^n \circledast_{jk} V^n)_{t} - (U^n \circledast_{jk} V^n)_{0s}\\
& \quad - \sum_{q \leqq k } \frac{1}{(k-q)!} (U^n_{0s})^{k-q} (U^n \cdot_{j} V^n)_{0s} \otimes (U^n \cdot_{q} V^n)_{st} \\
 & \quad -  \sum_{p+q <j+k} \frac{1}{ (j-p)! (k-q)!} (U^n_{0s})^{j+k - p -q} (U^n \ast_{pq} V^n)_{st}.
\end{align*}
Hence, by the contraction principle again with the aid of induction, we see that 
$\{ ( \delta U^n, (U^n \cdot_i V^n)_{i \in I}, (U^n \ast_{jk} V^n)_{(j,k)\in J}) \}$ satisfies the LDP on $C_{\Delta_T}  \times C_{\Delta_T} \times C_{\Delta_T} $ with good rate function (\ref{Ifun}).
\end{proof}


\begin{lemma}\label{inverse}
\begin{enumerate}
\item[(i)] The ($\alpha,\beta$) rough path $\mathbb{X}$ of Theorem~\ref{ldp} has exponential integrability, i.e.,  there exists $\eta >0$ such that
\begin{equation*}
\EXP{\exp{ \left \{  \eta ||| \mathbb{X}|||^2_{(\alpha, \beta)}  \right \}} } < \infty.
\end{equation*}
\item[(ii)] Assume that the family of random variables 
$$\mathbb{X}^{\epsilon} = ( \epsilon^{1/2} \hat{X}, \epsilon^{(|i|+1)/2} X^{(i)}, \epsilon^{(|j+k|+2)/2}\mathbf{X}^{(jk)})$$
taking values in $\Omega_{(\alpha,\beta) \text{-Hld}}$ satisfies the LDP 
on $C_{\Delta_T} \times C_{\Delta_T} \times C_{\Delta_T}$
(with the uniform topology). Then, $\mathbb{X}^{\epsilon}$ satisfies the LDP on
$\Omega_{(\alpha,\beta) \text{-Hld}}$
(in the $d_{(\alpha,\beta)}$ topology) with the same good rate function.
\end{enumerate}
\end{lemma}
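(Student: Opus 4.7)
For part (i), my plan is to exploit the fact that $X^{(i)}$ and $\mathbf{X}^{(jk)}$ each lie in a finite Wiener chaos (of orders at most $|i|+1$ and $|j+k|+2$ respectively), so that Nelson's hypercontractivity (Lemma~\ref{hyper}) produces sub-Gaussian tails on the correct homogeneous scales. Concretely, I would start from the pointwise $L^p$ bounds \eqref{Lp1}--\eqref{Lp2}, replace $p^{(i_1+1)/2}$ by $p^{(|i|+1)/2}$ (and analogously for the iterated case), and feed them into the Garsia--Rodemich--Rumsey inequality on $\Delta_T$. The assumption $\beta<\zeta$ provides the necessary slack in the H\"older exponents, so one obtains
\[
\bigl\| \|X^{(i)}\|_{|i|\beta+\alpha\text{-Hld}} \bigr\|_p \lesssim p^{(|i|+1)/2}, \quad \bigl\| \|\mathbf{X}^{(jk)}\|_{|j+k|\beta+2\alpha\text{-Hld}} \bigr\|_p \lesssim p^{(|j+k|+2)/2}.
\]
The identity $\|Y^r\|_p = \|Y\|_{rp}^r$ converts these into $L^p$ bounds of order $p$ for $\|X^{(i)}\|^{2/(|i|+1)}$ and $\|\mathbf{X}^{(jk)}\|^{2/(|j+k|+2)}$; by the Stirling-based power series comparison, any nonnegative random variable with such a linear-in-$p$ moment growth is exponentially integrable. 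Bounding $|||\mathbb{X}|||_{(\alpha,\beta)}^2$ by a constant times the sum of the squares of its finitely many summands and applying H\"older's inequality yields the claim for $\eta$ sufficiently small.

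For part (ii), I would follow the standard strategy of upgrading an LDP through exponential tightness in the finer topology. Choose $\alpha'\in(\alpha,1/2]$ and $\beta'\in(\beta,\zeta)$ close enough to $\alpha,\beta$ that the integer thresholds $n$ and $m$ and the inequalities $(n+1)\beta'+\alpha'>1$, $(m+1)\beta'+2\alpha'>1$ remain valid; part (i) applied with exponents $(\alpha',\beta')$ then gives $\mathbb{E}[\exp(\eta |||\mathbb{X}|||_{(\alpha',\beta')}^2)] < \infty$. By the homogeneity $|||\mathbb{X}^\epsilon|||_{(\alpha',\beta')} = \epsilon^{1/2}|||\mathbb{X}|||_{(\alpha',\beta')}$ and Chebyshev's inequality,
\[
\limsup_{\epsilon\searrow 0}\epsilon\log \mathbb{P}\bigl(|||\mathbb{X}^\epsilon|||_{(\alpha',\beta')} > M\bigr) \leqq -\eta M^2.
\]
An Arzel\`a--Ascoli-plus-interpolation argument on two-parameter H\"older functions shows that the closed balls $K_M := \{|||\mathbb{X}|||_{(\alpha',\beta')} \leqq M\}$ are compact in $(\Omega_{(\alpha,\beta)\text{-Hld}}, d_{(\alpha,\beta)})$: a uniform bound in a strictly higher H\"older exponent together with uniform convergence delivers convergence in the lower H\"older norm via the interpolation inequality $\|f\|_{\gamma\text{-Hld}} \leqq \|f\|_\infty^{1-\gamma/\gamma'}\|f\|_{\gamma'\text{-Hld}}^{\gamma/\gamma'}$. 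This establishes exponential tightness of $\{\mathbb{X}^\epsilon\}$ in the $d_{(\alpha,\beta)}$ topology.

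With the LDP in the coarser uniform topology and exponential tightness in the finer $d_{(\alpha,\beta)}$ topology both in hand, the standard LDP-upgrade criterion (e.g.\ Lemma~4.1.5 of Dembo--Zeitouni, or equivalently Puhalskii's inverse criterion) delivers the LDP on $(\Omega_{(\alpha,\beta)\text{-Hld}}, d_{(\alpha,\beta)})$ with the same good rate function. The main obstacle will be the Arzel\`a--Ascoli step: although each component is classical, one must carry it out uniformly across all the finitely many indices in $I$ and $J$, tracking the different H\"older exponents $|i|\beta+\alpha$ and $|j+k|\beta+2\alpha$ simultaneously. Since these exponents are uniformly bounded away from $0$ and below $1$ and the moment estimates are structurally identical across indices, this reduces to routine bookkeeping rather than any conceptually new difficulty.
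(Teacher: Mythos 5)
Your proposal is correct and follows essentially the same route as the paper: for (i), pointwise chaos/hypercontractivity moment bounds upgraded to H\"older-norm moment bounds growing like $p^{(|i|+1)/2}$ (the paper uses Kolmogorov's criterion where you invoke Garsia--Rodemich--Rumsey, and it bounds $\||||\mathbb{X}|||_{(\alpha,\beta)}\|_p\lesssim\sqrt p$ directly rather than assembling componentwise sub-Gaussianity via H\"older), followed by the Stirling series comparison; for (ii), the inverse contraction principle combined with exponential tightness obtained from part (i) at slightly larger exponents $(\alpha',\beta')$ and precompactness of the corresponding balls in the $d_{(\alpha,\beta)}$ topology. The only addition you make is spelling out the Arzel\`a--Ascoli-plus-interpolation compactness step, which the paper leaves implicit.
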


\begin{proof}
(i) Let $Z :=  ||| \mathbb{X}|||_{(\alpha,\beta)} $.
By the inequality (\ref{Lp1}), (\ref{Lp2}), we have that for all $p \in [2,\infty)$,
 \[
 ||X^{(i)}_{st}||_{p} 
 \leqq  C p^{(i_1+1)/2} |t-s|^{|i| \zeta + 1/2}
 \quad || \mathbf{X}^{(jk)}_{st}||_{p} \leqq C p^{(j_1+k_1+2)/2} |t-s|^{|j+k|\zeta+ 1},
 \]
and this inequality and Kolmogorov's continuity theorem (see Theorem 3.1 in \cite{FH}) imply that for $p \geqq \xi$,
\[
\left| \left| ||\hat{X}||_{\beta \text{-Hld}} \right| \right|_p 
\leqq \tilde{c} \sqrt{p}, \quad
\left| \left| ||X^{(i)}||_{|i| \beta + \alpha \text{-Hld}} \right| \right|_p 
\leqq \tilde{c} p^{(i_1+1)/2},
\]
and
\[
 \left| \left| ||\mathbf{X}^{(jk)}||_{ |j+k| \beta + 2\alpha \text{-Hld}} \right| \right|_p 
\leqq  \tilde{c} p^{(j_1+k_1+2)/2},
\]
where $\xi := \lceil \hat{\xi}^{-1} \rceil + \max_{i\in I} \lceil \xi_i^{-1} \rceil + \max_{(j,k) \in J} \lceil \xi_{jk}^{-1} \rceil$, 
$\hat{\xi} := \zeta - \beta$,  
$\xi_i: = |i|(\zeta-\beta) +(1/2-\alpha)$, $\xi_{jk} := |j+k|(\zeta-\beta) +(1-2\alpha)$,
$\tilde{c} : = \hat{c} + \max_{i \in I} c_i + \max_{(j,k) \in J} c_{jk} $, and
$$ \hat{c} := \frac{2C}{1-(1/2)^{(\hat{\xi} - \xi^{-1} )}},\quad 
c_i := \frac{2C}{1-(1/2)^{( \xi_i - \xi^{-1} )}}, \quad 
c_{jk} := \frac{2C}{1-(1/2)^{( \xi_{jk} - \xi^{-1} )}}.$$
Then 
Jensen's inequality implies that
\begin{align*}
 \left| \left| \left ( || X^{(i)}||_{|i|\beta + \alpha \text{-Hld}} \right )^{1/(|i|+1)} \right| \right|_p 
 & \leqq \left| \left| ||X^{(i)}||_{|i|\beta + \alpha \text{-Hld}} \right| \right|_p^{1/(|i|+1)}
 \leqq \tilde{c}^{1/(|i|+1)} \sqrt{p},
\end{align*}
and similarly 
\[
 \left| \left| \left ( || \mathbf{X}^{(jk)}||_{|j+k|\beta + 2\alpha \text{-Hld}} \right )^{1/(|j+k|+2)} \right| \right|_p 
 \leqq \tilde{c}^{1/(|j+k|+2)} \sqrt{p}.
\]
Therefore, we have that
\begin{align*}
    \left| \left| Z \right| \right|_p 
    \leqq c \sqrt{p}, \quad p \geqq \xi,
\end{align*}
where 
$c:=  \tilde{c} + \sum_{i \in I} \tilde{c}^{1/(|i|+1)} + \sum_{(j,k) \in J} \tilde{c}^{1/(|j+k|+2)}$. Then we have that
\begin{align*}
    \EXP{\exp\{\eta Z^2\}} 
     = \sum_{n=0}^{\infty} \frac{\eta^n}{n!}  ||Z||^{2n}_{2n} 
     \leqq \sum_{2n\leqq  \xi } \frac{\eta^n}{n!} ||Z||^{2n}_{2n} 
     + \sum_{2n >\xi } \frac{(2c^2 \eta)^n}{n!} n^n,
\end{align*}
and so taking $\eta>0$ small enough ($2c^2\eta e <1$), Stirling's formula implies the claim.
\\

(ii)
We adapt the argument of \cite{Friz}[Proposition~13.43].
By the inverse contraction principle (see Theorem~4.2.4 of \cite{DeZe}), it is sufficient to prove that $\{ \mathbb{X}^{\epsilon} \}$ is exponentially tight on $\Omega_{(\alpha,\beta) \text{-Hld}}$. By (i), 
there exists $c>0$ such that
\begin{equation*} 
\PROB{||| \mathbb{X} |||_{(\alpha^\prime,\beta^\prime)} > l} \leqq \exp{( -cl^2)}
\end{equation*}
for any $\alpha^\prime \in (\alpha,1/2)$ and $\beta^\prime \in (\beta, 1/2)$,
and this implies that for all $M>0$, there exists a precompact set
\[
K_M = \left \{ \mathbb{X} \in \Omega_{(\alpha,\beta) \text{-Hld}}\middle|\  ||| \mathbb{X}|||_{(\alpha^\prime,\beta^\prime)} \leqq \sqrt{M/c} \right \}
\]
on $\Omega_{(\alpha,\beta) \text{-Hld}}$ such that 
\begin{align*}
\epsilon \log \PROB{   \mathbb{X}^{\epsilon} \in K_M^{c} } 
&= \epsilon \log \PROB{  ||| \mathbb{X}^{\epsilon}|||_{(\alpha^\prime,\beta^\prime)} > \sqrt{\frac{M}{c}} }\\
&= \epsilon \log \PROB{  |||\mathbb{X}|||_{(\alpha^\prime,\beta^\prime)} > \sqrt{\frac{M}{c \epsilon}}  }
\leqq - M,
\end{align*}
from which we conclude the claim.
\end{proof}

The inverse contraction principle (see Theorem~4.2.4 of \cite{DeZe}) implies that $\{ \epsilon^{1/2} (W,W^{\perp})\}$ satisfies the LDP on $C^{\gamma \text{-Hld}}$ with speed $\epsilon^{-1}$ with good rate function $\tilde{I}^{\#}$ (note that $\gamma \in (0,1/2)$). By Theorem~1 in \cite{Fu23}, the map $f \mapsto \mathcal{K} f$ is continuous from $C^{\gamma \text{-Hld}}$ to $C^{\zeta \text{-Hld}}$. Then the contraction principle implies that $\{ \epsilon^{1/2} (\hat{X}^{(1)},X) = \epsilon^{1/2} (\mathcal{K}W, \rho W + \sqrt{1-\rho^2}W^{\perp} ) \}$ satisfies the LDP on $C_{[0,T]} \times C_{[0,T]}$  with speed $\epsilon^{-1}$ with good rate function 
\[
\tilde{I}^{(1)} (w,v)  = \inf \left \{ \tilde{I}^{\#}(\tilde{v}) \middle| \tilde{v} \in \mathcal{H}, (w,v) = \left( \int_0^{\cdot} \kappa(\cdot - r ) \D \tilde{v}^{(1)}_r, \rho \tilde{v}^{(1)} + \sqrt{1-\rho^2} \tilde{v}^{(2)} \right)  \right\}.
\]
Let $F_{\epsilon} :C_{[0,T]} \times C_{[0,T]} \rightarrow C_{[0,T]} \times C_{[0,T]}  \times C_{[0,T]} $ and $F:C_{[0,T]} \times C_{[0,T]} \rightarrow C_{[0,T]} \times C_{[0,T]}  \times C_{[0,T]} $ as  $F_{\epsilon} (w,v)_t := ((w_t,\epsilon^{1/2} t), v_t)$ and $F (w,v)_t := ((w_t,0), v_t)$. 
Then $F$ is continuous and $F_{\epsilon} (w^{\epsilon}, v^{\epsilon}) \to F(w,v)$ for all converging sequences $( w^{\epsilon} , v^{\epsilon}) \to (w,v)$  with $\tilde{I}^{(1)} (w,v) < \infty$. 
Hence the extended contraction principle \cite{Pu}[\text{Theorem~2.1}] implies that $\{ \epsilon^{1/2} (\hat{X}, X) \}$ satisfies the LDP on $C_{[0,T]} \times C_{[0,T]}  \times C_{[0,T]}$ with speed $\epsilon^{-1}$ with good rate function
\[
\tilde{J}^{\ast} (u, v) := \inf\left \{ \tilde{I}^{\#} (\tilde{v} ) \middle| \tilde{v} \in \mathcal{H}, \ (u,v) = \mathbb{K} (\tilde{v}) \right\}.
\]
As mentioned earlier, $\{ X^\epsilon = \epsilon^{1/2} X \}$ is UET by Lemma~2.4 of \cite{garcia} with
$n = \epsilon^{-1}$. Therefore, by Lemma~\ref{inverse} (ii) and Theorem~\ref{maincor} (regarding $n = \epsilon^{-1}$, $U=\hat{X}$ and $V=X$), we have proved Theorem~$\ref{ldp}$.

\appendix
\section{A lemma from rough path theory}

\begin{lemma}[\cite{inahama} Proposition~1.6]\label{itten}
Let $\omega$ be a control function, i.e., 
\[
\omega(s,u) + \omega(u,t) \leqq \omega(s,t), \quad 0 \leqq s \leqq u \leqq t \leqq T,
\] 
and $\mathcal{P} = \{s = t_0 < t_1 <... < t_N = t\}$ be a partition on $[s,t]$ ($N\geqq 2$). Then there exists an integer $i$ ($1 \leqq i \leqq N$) such that 
\begin{equation*}
\omega(t_{i-1},t_{i+1}) \leqq \frac{2}{N-1} \omega(s,t).
\end{equation*}
\end{lemma}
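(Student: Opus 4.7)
The plan is to prove the lemma by a pigeonhole (averaging) argument applied to the sums of $\omega$ over overlapping two-step intervals $[t_{i-1},t_{i+1}]$. Note that while the statement writes $1 \le i \le N$, the object $t_{i+1}$ requires $i \le N-1$, so I treat the range as $1 \le i \le N-1$, which gives $N-1$ candidate pairs.

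First I would partition the index set $\{1,2,\dots,N-1\}$ into odd and even indices. For odd $i$, the two-step intervals $[t_{i-1},t_{i+1}]$ are pairwise non-overlapping (they share only endpoints) and lie inside $[s,t]$; the same holds for even $i$. Applying superadditivity (the assumption $\omega(a,b)+\omega(b,c)\le\omega(a,c)$) inductively along each of these two chains, I obtain
\[
\sum_{\substack{1\le i\le N-1\\ i \text{ odd}}} \omega(t_{i-1},t_{i+1}) \;\le\; \omega(s,t), \qquad \sum_{\substack{1\le i\le N-1\\ i \text{ even}}} \omega(t_{i-1},t_{i+1}) \;\le\; \omega(s,t).
\]
Adding the two inequalities yields
\[
\sum_{i=1}^{N-1} \omega(t_{i-1},t_{i+1}) \;\le\; 2\,\omega(s,t).
\]

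The lemma then follows immediately: since the left-hand side is a sum of $N-1$ nonnegative terms bounded by $2\omega(s,t)$, at least one index $i\in\{1,\dots,N-1\}$ must satisfy
\[
\omega(t_{i-1},t_{i+1}) \;\le\; \frac{2}{N-1}\,\omega(s,t),
\]
which is the desired conclusion.

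There is no serious obstacle; the only subtlety is getting the parity decomposition right so that each chain of two-step intervals is genuinely non-overlapping (hence one can legitimately invoke superadditivity to bound its $\omega$-sum by $\omega(s,t)$ rather than, say, $2\omega(s,t)$). Once this is in place, the factor of $2$ in the conclusion exactly reflects the twofold cover of $[s,t]$ by the odd and even chains combined.
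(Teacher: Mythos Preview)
Your proof is correct and is essentially the same as the paper's: the paper also splits the sum $\sum_{i=1}^{N-1}\omega(t_{i-1},t_{i+1})$ into its odd- and even-indexed parts, bounds each by $\omega(s,t)$ via superadditivity, and then applies the pigeonhole argument.
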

\begin{proof}
By the definition of $\omega$, we have
\[
\sum_{p=1}^{N-1} \omega(t_{i-1}, t_{i+1}) = \sum_{i:\text{odd}} \omega(t_{i-1}, t_{i+1}) + \sum_{i:\text{even}} \omega(t_{i-1}, t_{i+1}) \leqq 2 \omega (s,t).
\]
Therefore, there exists such $i$ that satisfies the desired inequality.
\end{proof}

\section{Proof of Theorem \ref{FZ}}\label{C}

\begin{proof}
For brevity, let $ \sigma := \sigma(S_0)$. 
By Theorem~$\ref{thm:36}$ and the contraction principle, $t^{H-1/2} \bar{S}_{t}$ satisfies the LDP with speed $t^{-2H}$ with good rate function 
\begin{align*}
 \tilde{J} (\tilde{s})
 := \inf \left \{ \tilde{I}^{\#}(\tilde{v}) \middle| \tilde{v} \in \mathcal{H}, \ \tilde{s} =   \left (\sigma \int  f(\widehat{\mathbb{L}\circ \mathbb{K}} (\tilde{v}) \D \mathbb{L} \circ \mathbb{K} (\tilde{v}) \right)^{(1)}_{01} \right \}.
\end{align*}
Let $\tilde{v} = (h^1,h^2) \in \mathcal{H} (\mathbb{R}) \times \mathcal{H}(\mathbb{R})$.
Then 
\begin{align*}
\tilde{s} 
& =  \left( \sigma \int f(\widehat{\mathbb{L} \circ \mathbb{K}} (\tilde{v})) \mathbb{L} \circ \mathbb{K} (\tilde{v}) \right)^{(1)}_{01} \\
& = \sigma  \int_0^1 f \left (\int_0^t \kappa_H (t-r) \dot{h}^1_r \D r, 0 \right) \D \left ( \rho h^1_t + \sqrt{1-\rho^2} h^2_t \right)\\
& = \rho \sigma \int_0^1 f \left (\int_0^t \kappa_H (t-r) \dot{h}^1_r \D r ,0\right) \D h^1_t 
+  \sqrt{1-\rho^2} \sigma \int_0^1 f \left (\int_0^t \kappa_H (t-r) \dot{h}^1_r \D r ,0\right) \D h^2_t,
\end{align*}
and so
\begin{equation}\label{cond2}
\frac{\tilde{s} -  \rho \sigma  \int_0^1 f \left (\int_0^t \kappa_H  (t-r) \dot{h}^1_r \D r,0 \right) \D h^1_t }{\sqrt{1-\rho^2}}  = \sigma \int_0^1 f \left (\int_0^t \kappa_H (t-r) \dot{h}^1_r \D r,0 \right) \D h^2_t.
\end{equation}
Fix $h_1$, and minimize $\frac{1}{2} || \tilde{v} ||^2_{\mathcal{H}(\mathbb{R}^2) }$ with respect to $h_2 \in \mathcal{H}(\mathbb{R})$ under the condition ($\ref{cond2}$). 
Let $\tilde{h}$ be the minimizer. Take $\epsilon > 0$ and $\hat{h} \in \mathcal{H}(\mathbb{R})$, and consider $\tilde{h} + \epsilon \hat{h}$. Because $\tilde{h}$ satisfies the condition ($\ref{cond2}$), 
\begin{equation}\label{orth}
\int_0^1 f \left (\int_0^t \kappa_H (t-r) \dot{h}^1_r \D r,0 \right) \D \hat{h}_t =0.
\end{equation}
Because $\tilde{h}$ is the minimizer, we have
\[
\left.\frac{\D}{\D \epsilon}\right|_{\epsilon=0} \frac{1}{2} \int_0^1 ( \dot{\tilde{h}}_r + \epsilon \dot{\hat{h}}_r)^2  \D r =0,  \ \ 
\text{i.e.,} \ \ 
\int_0^1 \dot{\tilde{h}}_r \dot{\hat{h}}_r \D r =0,
\]
for any $\hat{h}$ with \eqref{orth}.
Therefore, there exists $c \in \mathbb{R} $ such that 
\[
\dot{\tilde{h}} = c f\left (\int_0^{\cdot} \kappa_H  (\cdot-r) \dot{h}^1_r \D r,0 \right).
\]
Hence 
\[
\frac{\tilde{s} -  \rho \sigma \int_0^1 f \left (\int_0^t \kappa_H (t-r) \dot{h}^1_r \D r,0 \right) \D h^1_t }{\sqrt{1-\rho^2}} 
= c \sigma \int_0^1 f^2 \left (\int_0^t \kappa_H (t-r) \dot{h}^1_r \D r,0 \right) \D t,
\]
and we conclude that 
\begin{align*}
 \tilde{J}(\tilde{s}) = \tilde{I}^{\#}(\tilde{v} ) 
 = \frac{1}{2} \int_0^1 |\dot{h}^1_r|^2 \D s + \frac{ \left \{\tilde{s} - \rho \sigma \int_0^1 f \left (\int_0^t \kappa_H (t-r) \dot{h}^1_r \D r,0 \right) \D h^1_t \right\}^2 }{2 (1 - \rho^2) \sigma^2 \int_0^1 f^2 \left(\int_0^t \kappa_H (t-r) \dot{h}^1_r \D r,0 \right) \D t },
\end{align*}
which is the claim.
\end{proof}

\renewcommand\refname{\mbox{\normalfont\Large\bfseries Bibliography}}

\end{document}